\RequirePackage[l2tabu,orthodox]{nag} 
\documentclass[a4paper, 12pt, oneside]{article}

\usepackage[utf8]{inputenc}			

\usepackage{cmap}   

\usepackage[english]{babel}	
\usepackage[warn]{mathtext}
\usepackage[T2A]{fontenc}		

\usepackage{comment}    

\usepackage{geometry}   

\usepackage[inline]{enumitem}
\usepackage{indentfirst}
\usepackage{amsthm,amsmath,amscd}   
\usepackage{amsfonts,amssymb}       
\usepackage{mathtools}              
\usepackage{xfrac}                  

\usepackage{esint}

\usepackage{mathrsfs}				

\usepackage{mathrsfs}

\usepackage[inline]{enumitem}		
\usepackage{indentfirst}			

\usepackage{textcomp}

\usepackage[dvipsnames, table, hyperref]{xcolor} 

\IfFileExists{impnattypo.sty}{
	\usepackage[hyphenation, lastparline]{impnattypo}
}{}

\usepackage{microtype}

\usepackage[noadjust]{cite}

\usepackage{hyperref}

\usepackage{titlesec}

\newcommand{\paperTitle}{High-energy homogenization of a multidimensional nonstationary Schr\"{o}dinger equation}
\newcommand{\paperAuthor}{Mark Dorodnyi}
\newcommand{\paperGrant}{This work was performed at the Saint Petersburg Leonhard Euler International Mathematical Institute and supported by the Ministry of Science and Higher Education of the Russian Federation (agreement №~075-15-2022-287).}
\newcommand{\paperAdress}{St. Petersburg State University, Universitetskaya emb.~7/9, 199034 St.~Petersburg, Russia; e-mail: \texttt{mdorodni@yandex.ru}.}

\providecommand{\keywords}
{periodic differential operators, Schr\"{o}dinger-type equations, spectral bands, homogenization, effective operator, operator error estimates}

\DeclareMathOperator{\Dom}{Dom}

\DeclareMathOperator{\Ran}{Ran}
\DeclareMathOperator{\Ker}{Ker}

\DeclareMathOperator{\spec}{spec}

\theoremstyle{plain}
\newtheorem{thrm}{Theorem}[section]

\newtheorem{lemma}[thrm]{Lemma}

\newtheorem{remark}[thrm]{Remark}

\theoremstyle{definition}

\definecolor{linkcolor}{rgb}{0.9,0,0}
\definecolor{citecolor}{rgb}{0,0.6,0}
\definecolor{urlcolor}{rgb}{0,0,1}

\renewcommand{\le}{\leqslant}
\renewcommand{\ge}{\geqslant}

\hypersetup{
	unicode=true,
	linktocpage=true,           
	plainpages=false,           
	colorlinks,                 
	linkcolor={linkcolor},      
	citecolor={citecolor},      
	urlcolor={urlcolor},        
	pdftitle={\paperTitle},    
	pdfauthor={\paperAuthor},  
	pdfkeywords={\keywords},    
	pdflang={ru},
}

\geometry{a4paper, top=2cm, bottom=2cm, left=2cm, right=2cm, nofoot, nomarginpar} 
\setlength{\topskip}{0pt}   
\setlength{\footskip}{0.8cm}

\tolerance 1414
\hbadness 1414
\emergencystretch 1.5em 
\hfuzz 0.3pt
\vfuzz \hfuzz
\clubpenalty=10000      
\widowpenalty=10000     
\brokenpenalty=4991     

\titleformat{\section}[hang]{ \scshape\bfseries}{\thesection.}{0.5ex}{\centering}[]
\titleformat{\subsection}[runin]{\bfseries}{\thesubsection.}{0.4ex}{}[.]

\usepackage{setspace}
\singlespacing

\numberwithin{equation}{section}

\begin{document}
	\allowdisplaybreaks
	
\begin{onehalfspacing}
	\begin{center}
		\textbf{\LARGE \paperTitle\textsuperscript{\,}\footnote{\,\,\paperGrant}}
		\\
		\bigskip
		{\large \paperAuthor\textsuperscript{\,}\footnote{\,\,\paperAdress}}
	\end{center}
\end{onehalfspacing}

\begin{abstract}
	\noindent In $L_2(\mathbb{R}^d)$, we consider an elliptic differential operator $\mathcal{A}_\varepsilon = - \operatorname{div} g(\mathbf{x}/\varepsilon) \nabla + \varepsilon^{-2} V(\mathbf{x}/\varepsilon)$, $ \varepsilon > 0$, with periodic coefficients. For the nonstationary Schr\"{o}dinger equation with the Hamiltonian $\mathcal{A}_\varepsilon$, analogs of homogenization problems related to an arbitrary point of the dispersion relation of the operator $\mathcal{A}_1$ are studied (the so called high-energy homogenization).
	For the solutions of the Cauchy problems for these equations with special initial data, approximations in $L_2(\mathbb{R}^d)$-norm for small $\varepsilon$ are obtained.
\end{abstract}

\noindent\textbf{{Keywords:}} \keywords.

\section*{Introduction}
\addcontentsline{toc}{section}{Introduction}

\subsection{Periodic homogenization}
The study of the wave propagation in periodic structures is of significant interest both for applications and from the
theoretical point of view. Direct numerical simulations of such processes may be difficult. One of the approaches to study these problems is application of homogenization theory. The aim of homogenization is to describe the macroscopic properties of inhomogeneous media by taking into account the properties of the microscopic structure. An extensive literature is devoted to homogenization problems. First of all, we mention the books~\cite{BaPa, BeLP, ZhKO}.

Let us discuss a typical problem of homogenization theory. Let $\Gamma$ be a lattice in $\mathbb{R}^d$, and let $\Omega$ be the cell of $\Gamma$. For any $\Gamma$-periodic function $F(\mathbf{x})$, we denote $F^\varepsilon (\mathbf{x}) \coloneqq F(\varepsilon^{-1} \mathbf{x})$, where $\varepsilon > 0$ is a (small) parameter. In $L_2(\mathbb{R}^d)$,  consider a differential operator~(DO) formally given by
\begin{equation}
	\label{intro_hatA_eps}
	\widehat{\mathcal{A}}_\varepsilon = - \operatorname{div} g^\varepsilon (\mathbf{x}) \nabla,
\end{equation}
where $g(\mathbf{x})$ is a Hermitian $\Gamma$\nobreakdash-periodic $(d \times d)$\nobreakdash-matrix-valued function, bounded and positive definite. Operator~\eqref{intro_hatA_eps} models the simplest cases of microinhomogeneous media with $\varepsilon \Gamma$-periodic structure. Let $u_\varepsilon(\mathbf{x})$ be a (weak) solution of the elliptic equation
\begin{equation}
	\label{intro_elliptic_eq}
	- \operatorname{div} g^\varepsilon(\mathbf{x}) \nabla u_\varepsilon(\mathbf{x}) + u_\varepsilon(\mathbf{x}) = f(\mathbf{x}),
\end{equation} 
where $f \in L_2(\mathbb{R}^d)$. For $\varepsilon \to 0$, the solution $u_\varepsilon$ converges to the solution $u_0$ of the "homogenized" equation:
\begin{equation}
	\label{intro_elliptic_eq_homog}
	- \operatorname{div} g^0 \nabla u_0(\mathbf{x}) + u_0(\mathbf{x}) = f(\mathbf{x}).
\end{equation} 
The operator $\widehat{\mathcal{A}}^{\mathrm{hom}}= - \operatorname{div} g^0 \nabla$ is called the effective operator for $\widehat{\mathcal{A}}_\varepsilon$. The matrix $g^0$ is determined by a well-known procedure (see., e.g.,~\cite[Chapter~2, \S\,3]{BaPa}, \cite[Chapter~3, \S\,1]{BSu2003}) that requires solving an auxiliary boundary value problem on the cell $\Omega$. Besides finding the effective coefficients, the following questions are of great interest. \emph{What is the type of convergence $u_\varepsilon \to u_0$? What is an estimate for $u_\varepsilon - u_0$?}

\subsection{Operator error estimates in homogenization}
M.~Birman and T.~Suslina (see~\cite{BSu2003}) suggested the operator-theoretic (spectral) approach to homogenization problems in $\mathbb{R}^d$, based on the scaling transformation, the Floquet--Bloch theory, and the analytic perturbation theory. 

Let $u_\varepsilon$ be the solution of equation~\eqref{intro_elliptic_eq}, and let $u_0$ be the solution of equation~\eqref{intro_elliptic_eq_homog}. In~\cite{BSu2003},  it was proved that
\begin{equation}
	\label{intro_BSu2003_est_eq}
	\| u_\varepsilon - u_0 \|_{L_2(\mathbb{R}^d)} \le C \varepsilon \|f\|_{L_2(\mathbb{R}^d)}.
\end{equation}
Since $u_\varepsilon = (\widehat{\mathcal{A}}_\varepsilon + I)^{-1} f$ and $u_0 = (\widehat{\mathcal{A}}^{\mathrm{hom}}+ I)^{-1} f$, estimate~\eqref{intro_BSu2003_est_eq} can be rewritten in operator terms:
\begin{equation}
	\label{intro_BSu2003_est}
	\| (\widehat{\mathcal{A}}_\varepsilon + I)^{-1} - (\widehat{\mathcal{A}}^{\mathrm{hom}}+ I)^{-1} \|_{L_2(\mathbb{R}^d) \to L_2(\mathbb{R}^d)} \le C \varepsilon.
\end{equation} 
Parabolic equations were studied in~\cite{Su2004, Su2007}. In operator terms, the following approximation for the parabolic semigroup $e^{-\tau \widehat{\mathcal{A}}_\varepsilon}$, $\tau > 0$, was obtained:
\begin{equation}
	\label{intro_Su2004_est}
	\| e^{-\tau \widehat{\mathcal{A}}_{\varepsilon}} - e^{-\tau \widehat{\mathcal{A}}^{\mathrm{hom}}} \|_{L_2(\mathbb{R}^d) \to L_2(\mathbb{R}^d)} \le C \varepsilon (\tau + \varepsilon^2)^{-1/2}, \qquad \tau > 0.
\end{equation}
Estimates~\eqref{intro_BSu2003_est}, \eqref{intro_Su2004_est} are order-sharp; the constants $C$ are controlled explicitly in terms of the problem data. These estimates are called~\emph{operator error estimates} in homogenization. More accurate approximations for the resolvent and the exponential  with correctors taken into account were found in~\cite{BSu2005, BSu2006, V, Su2010}.

A different approach to operator error estimates (the so called “shift method”) for the elliptic and parabolic problems was suggested by V.~Zhikov and S.~Pastukhova in the papers~\cite{Zh2006, ZhPas2005, ZhPas2006}.  See also the survey~\cite{ZhPas2016}.

The situation with homogenization of nonstationary Schr\"{o}dinger-type equations and hyperbolic equations is quite different. The papers~\cite{BSu2008, Su2017, DSu2018, M2021, DSu2020, D2021} were devoted to such problems. In operator terms, the behavior of the operator-functions $e^{-i \tau \widehat{\mathcal{A}}_{\varepsilon}}$ and $\cos(\tau \widehat{\mathcal{A}}_{\varepsilon}^{1/2})$, $\widehat{\mathcal{A}}_{\varepsilon}^{-1/2} \sin(\tau \widehat{\mathcal{A}}_{\varepsilon}^{1/2})$ (where $\tau \in \mathbb{R}$) for small $\varepsilon$ was studied. For these operator-functions, it is impossible to obtain approximations in the operator norm on $L_2 (\mathbb{R}^d)$, and we are forced to consider the norm of operators acting from the Sobolev space $H^q (\mathbb{R}^d)$ (with a suitable $q$) to $L_2 (\mathbb{R}^d)$. In~\cite{BSu2008}, the following sharp-order estimates were proved:
\begin{align}
	\label{intro_exp_est}
	\| e^{-i \tau \widehat{\mathcal{A}}_{\varepsilon}} - e^{-i \tau \widehat{\mathcal{A}}^{\mathrm{hom}}} \|_{H^3 (\mathbb{R}^d) \to L_2 (\mathbb{R}^d)} &\le C (1 + |\tau|)\varepsilon, \\
	\label{intro_cos_est}
	\| \cos(\tau \widehat{\mathcal{A}}_{\varepsilon}^{1/2}) - \cos(\tau (\widehat{\mathcal{A}}^{\mathrm{hom}})^{1/2}) \|_{H^2 (\mathbb{R}^d) \to L_2 (\mathbb{R}^d)} &\le  C (1 + |\tau|)\varepsilon.
\end{align}
In~\cite{M2021}, the result for the operator $\widehat{\mathcal{A}}_{\varepsilon}^{-1/2} \sin(\tau \widehat{\mathcal{A}}_{\varepsilon}^{1/2})$ was obtained:
\begin{equation}
	\label{intro_sin_est}
	\| \widehat{\mathcal{A}}_{\varepsilon}^{-1/2} \sin(\tau \widehat{\mathcal{A}}_{\varepsilon}^{1/2}) - (\widehat{\mathcal{A}}^{\mathrm{hom}})^{-1/2} \sin(\tau (\widehat{\mathcal{A}}^{\mathrm{hom}})^{1/2}) \|_{H^1 (\mathbb{R}^d) \to L_2 (\mathbb{R}^d)} \le C (1 + |\tau|)\varepsilon.
\end{equation}
Moreover, in~\cite{M2021}, an approximation of the operator $\widehat{\mathcal{A}}_{\varepsilon}^{-1/2} \sin(\tau \widehat{\mathcal{A}}_{\varepsilon}^{1/2})$ for a fixed $\tau$ in the \mbox{$(H^2 \to H^1)$}\nobreakdash-norm with error of order $O (\varepsilon)$ (with a corrector taken into account) was obtained. Next, in~\cite{Su2017, DSu2018, DSu2020, D2021}, it was shown that these results are sharp with respect to the norm type as well as with respect to the dependence on $\tau$ (for large $\tau$). On the other hand, it was shown  that under some additional assumptions (e.g., if the matrix $g(\mathbf{x})$ has real entries) estimates~\eqref{intro_exp_est}--\eqref{intro_sin_est} can be improved:
\begin{alignat}{2}
	\notag
	\| e^{-i \tau \widehat{\mathcal{A}}_{\varepsilon}} - e^{-i \tau \widehat{\mathcal{A}}^{\mathrm{hom}}} &\|_{H^2 (\mathbb{R}^d) \to L_2 (\mathbb{R}^d)} &  &\le C (1 + |\tau|^{1/2})\varepsilon, 
	\\
	\label{intro_cos_improved_est}
	\| \cos(\tau \widehat{\mathcal{A}}_{\varepsilon}^{1/2}) - \cos(\tau (\widehat{\mathcal{A}}^{\mathrm{hom}})^{1/2}) &\|_{H^{3/2} (\mathbb{R}^d) \to L_2 (\mathbb{R}^d)} & &\le  C (1 + |\tau|^{1/2})\varepsilon,
	\\
	\label{intro_sin_improved_est}
	\| \widehat{\mathcal{A}}_{\varepsilon}^{-1/2} \sin(\tau \widehat{\mathcal{A}}_{\varepsilon}^{1/2}) - (\widehat{\mathcal{A}}^{\mathrm{hom}})^{-1/2} \sin(\tau (\widehat{\mathcal{A}}^{\mathrm{hom}})^{1/2}) &\|_{H^{1/2} (\mathbb{R}^d) \to L_2 (\mathbb{R}^d)} & &\le C (1 + |\tau|^{1/2})\varepsilon.
\end{alignat}

Note that in~\cite{BSu2003, Su2004,  Su2007, BSu2005, BSu2006, V, Su2010, BSu2008, Su2017, DSu2018, M2021, DSu2020, D2021,Su2022a,Su2022b}, a much broader class of operators than~\eqref{intro_hatA_eps} (including matrix DOs) was studied. In particular, operators of the form
\begin{equation}
	\label{intro_A_eps}
	\mathcal{A}_{\varepsilon} = - \operatorname{div} \check{g}^{\varepsilon}(\mathbf{x}) \nabla + \varepsilon^{-2}V^{\varepsilon}(\mathbf{x})
\end{equation}
were considered. Here $\check{g}(\mathbf{x})$ is a $\Gamma$-periodic positive definite and bounded $(d \times d)$-matrix-valued function with real entries, $V(\mathbf{x})$ is a $\Gamma$-periodic real-valued function, $V \in L_p(\Omega)$ with a suitable $p$ (and it is assumed that $\inf \spec \mathcal{A}_1 = 0$). 
For operator~\eqref{intro_A_eps}, it is impossible to find an operator $\mathcal{A}^{\mathrm{hom}}$ with constant coefficients such that the corresponding operator-functions converge to the operator-functions of $\mathcal{A}^{\mathrm{hom}}$. However, some approximations can be found if we "border" operator-functions of $\widehat{\mathcal{A}}^{\mathrm{hom}}$ by appropriate rapidly oscillating factors. In particular, an analog of~\eqref{intro_BSu2003_est} is as follows:
\begin{equation*}
	\| (\mathcal{A}_\varepsilon + I)^{-1} - [\omega^\varepsilon](\widehat{\mathcal{A}}^{\mathrm{hom}}+ I)^{-1}[\omega^\varepsilon] \|_{L_2(\mathbb{R}^d) \to L_2(\mathbb{R}^d)} \le C \varepsilon,
\end{equation*} 
where $\omega (\mathbf{x})$ is a positive $\Gamma$-periodic solution of the equation
\begin{equation*}
	- \operatorname{div} \check{g}(\mathbf{x}) \nabla \omega (\mathbf{x}) + V(\mathbf{x}) \omega (\mathbf{x}) = 0
\end{equation*}
satisfying the normalization condition $\| \omega \|_{L_2(\Omega)}^2 = | \Omega |$, and $\widehat{\mathcal{A}}^{\mathrm{hom}}$ is the effective operator for operator~\eqref{intro_hatA_eps} with the matrix $g(\mathbf{x}) = \check{g} (\mathbf{x}) \omega^2 (\mathbf{x})$.

Let us explain the method using the example of operator~\eqref{intro_hatA_eps}. The scaling transformation reduces investigation of the behavior of the operator $(\widehat{\mathcal{A}}_\varepsilon + I)^{-1}$, $\varepsilon \to 0$, to studying the operator $(\widehat{\mathcal{A}} + \varepsilon^2 I)^{-1}$, where $\widehat{\mathcal{A}} = \widehat{\mathcal{A}}_1 = - \operatorname{div} g(\mathbf{x}) \nabla$. Next, by the Floquet--Bloch theory, the operator~$\widehat{\mathcal{A}}$ expands in the direct integral of the operators $\widehat{\mathcal{A}}(\mathbf{k})$ acting in the space $L_2 (\Omega)$. The operator $\widehat{\mathcal{A}}(\mathbf{k})$ is defined by the differential expression $-\operatorname{div}_\mathbf{k} g(\mathbf{x}) \nabla_\mathbf{k}$, where $\nabla_\mathbf{k} = \nabla + i \mathbf{k}$, $\operatorname{div}_\mathbf{k} = \operatorname{div} + i \langle \mathbf{k}, \cdot \rangle$, with periodic boundary conditions. The spectrum of the operator $\widehat{\mathcal{A}}(\mathbf{k})$ is discrete.
It turns out that the behavior of the resolvent $(\widehat{\mathcal{A}} + \varepsilon^2 I)^{-1}$ can be described in terms of the threshold characteristics of $\widehat{\mathcal{A}}$ at the edge of the spectrum, i.e., it is sufficient to know the spectral decomposition of $\widehat{\mathcal{A}}$ only near the lower edge of the spectrum. In particular, the effective matrix $g^0$ is a Hessian of the first band function $E_1(\mathbf{k})$ at the point $\mathbf{k} = 0$.

Finally, we mention the recent paper~\cite{LiSh2021}, where the authors investigated the problem of convergence rates for a solution of the initial-Dirichlet boundary value problem for a wave equation; analogs of estimates~\eqref{intro_cos_improved_est}, \eqref{intro_sin_improved_est} as well as results with the Dirichlet corrector were obtained.

\subsection{High-frequency homogenization}
As stated above, only a small neighborhood of the bottom of the spectrum (i.e., waves with low frequencies) contributes to homogenization. However, we can consider problems of wave propagation when the frequency is proportional to $\varepsilon^{-1}$ or $\varepsilon^{-2}$ (the high-frequency mode). In this case, even the leading order of the asymptotics oscillates rapidly. These problems were studied in~\cite[Chapter~4]{BeLP} using WKB-ansatz.

Traditional methods of homogenization theory, related to asymptotic expansions in two scales, were applied to these problems in~\cite{CrKaPi2010, HaMiCr2016}. We also cite the paper~\cite{Ce_etal_2015}, where application of the results of~\cite{CrKaPi2010} to photonic crystals was considered. In~\cite{CrKaPi2010}, an asymptotic expansion for solutions of the equation
\begin{equation*}
	\operatorname{div} g^\varepsilon(\mathbf{x}) \nabla u_\varepsilon(\mathbf{x}) + \nu^2 \rho^\varepsilon(\mathbf{x}) u_\varepsilon(\mathbf{x}) = 0,
\end{equation*}
which are perturbations of the standing waves, was obtained (the functions $g(\mathbf{x})$, $\rho(\mathbf{x})$ were supposed to be sufficiently smooth and $\Gamma$-periodic). In~\cite{HaMiCr2016}, a similar problem for travelling waves was considered.

For a nonstationary Schr\"{o}dinger equation results of this kind are called effective mass theorems (see, e.g., the course~\cite{A2008} and references therein). In the paper~\cite{APi2004}, homogenization of the Cauchy problem for a nonstationary Schr\"{o}dinger equation with well-prepared initial data concentrating on a Bloch eigenfunction was studied using techniques of two-scale convergence and suitable oscillating test functions; a rigorous derivation of effective mass theorems was obtained (in terms of the strong two-scale convergence).
In~\cite{BaBenAb2011}, the effective mass approximation and the $k\cdot p$ multi-band models, well known in solid-state physics, were discussed. Such homogenization asymptotics were investigated by using the envelope-function decomposition. These models were proved to be close (in the strong sense) to the exact dynamics. Moreover, the position density was proved to converge weakly to its effective mass approximation.

Finally, we also mention the papers~\cite{KuRa2012,KhKuRa2017}, where asymptotics of Green’s function for different values of the spectral parameter has been studied.

Now, let us discuss error estimates for high-frequency homogenization. This topic has been studied in~\cite{B03,SuKh2009,MishSlSu2022,AkhAkSlSu2021,D2022} in the one-dimensional case ($d=1$) and in~\cite{BSu2004, SuKh2011, Mish2022} in the case of arbitrary dimension $d$. It is well-known that the spectrum of $\mathcal{A}$ has a band structure and may have gaps. For the sake of simplicity, we consider the case where $d=1$ and $\Gamma = \mathbb{Z}$;  in this case we shall use the notation $A_\varepsilon$ for operator~\eqref{intro_A_eps}. Let $\sigma > 0$ be a (non-degenerate) left edge of a band with an odd number ($\ge 3$) in the spectrum of the operator $A = A_1$. Then for $A_\varepsilon$, this edge "moves" to the point $\varepsilon^{-2} \sigma$ (to the high-frequency (high-energy) region). Instead of~\eqref{intro_elliptic_eq}, we consider the
equation
\begin{equation}
	\label{intro_int_edge_elliptic_eq_est}
	- \frac{d}{dx} g^\varepsilon(x) \frac{d}{dx} u_\varepsilon(x) - (\varepsilon^{-2} \sigma - \varkappa^2) u_\varepsilon(x) = f(x),
\end{equation} 
where $f \in L_2(\mathbb{R})$. It is supposed that $\varkappa > 0$ is such that the point $\varepsilon^{-2} \sigma - \varkappa^2$ belongs to the gap in the spectrum of the operator $A_\varepsilon$. Similarly to~\eqref{intro_BSu2003_est}, the question is reduced to studying the operator $(A_\varepsilon - (\varepsilon^{-2} \sigma - \varkappa^2)I)^{-1}$. In~\cite{B03}, the following result was proved:
\begin{equation}
	\label{intro_int_gap_est}
	\| (A_\varepsilon - (\varepsilon^{-2} \sigma - \varkappa^2)I)^{-1} - [\varphi_\sigma^\varepsilon](A_\sigma^{\mathrm{hom}} + \varkappa^2 I)^{-1}[\varphi_\sigma^\varepsilon] \|_{L_2(\mathbb{R}) \to L_2(\mathbb{R})} \le C \varepsilon.
\end{equation} 
Here $A_\sigma^{\mathrm{hom}} = -b_\sigma \frac{d^2}{dx^2}$ is the corresponding effective operator, $b_\sigma > 0$ is the coefficient in the asymptotics of the band function $E(k)$ corresponding to the
band for which $\sigma$ is the left edge: $E(k) \sim \sigma + b_\sigma k^2$, $k \sim 0$; and $\varphi_\sigma$ is a real-valued periodic solution of the equation $A \varphi_\sigma = \sigma \varphi_\sigma$, normalized in $L_2(0,1)$. Consequently, the possibility of homogenization for equation~\eqref{intro_int_edge_elliptic_eq_est} is a threshold effect near the edge of an internal gap.

Estimate~\eqref{intro_int_gap_est} was obtained in~\cite{B03} in the case where $V(x) = 0$. In~\cite{BSu2004}, an analog of estimate~\eqref{intro_int_gap_est} was proved for operators~\eqref{intro_A_eps} in arbitrary dimension $d \ge 1$. More accurate approximations with correctors were obtained in~\cite{SuKh2009, MishSlSu2022, SuKh2011}. 

Parabolic equations in the one-dimensional case were studied in~\cite{AkhAkSlSu2021}. It was proved that
\begin{equation*}
	\| e^{-\tau A_{\varepsilon}} \mathcal{E}_{A_\varepsilon}[\varepsilon^{-2} \sigma, \infty) - e^{-\tau \sigma/\varepsilon^2} [\varphi_\sigma^\varepsilon]e^{-t A_\sigma^{\mathrm{hom}}}[\varphi_\sigma^\varepsilon] \|_{L_2(\mathbb{R}) \to L_2(\mathbb{R})} \le C e^{-\tau \sigma/\varepsilon^2} \varepsilon (\tau + \varepsilon^2)^{-1/2}, \quad \tau > 0,
\end{equation*}
and a more accurate approximation with a corrector was found. Here $\mathcal{E}_{A_\varepsilon}[\varepsilon^{-2} \sigma, \infty)$ is the spectral projection of the operator $A_\varepsilon$ corresponding to the
interval $[\varepsilon^{-2} \sigma, \infty)$. The generalization of this result for the case of arbitrary dimension was obtained in~\cite{Mish2022}.

In the paper~\cite{D2022}, operator error estimates for high-frequency homogenization of nonstationary Schr\"{o}dinger equations and hyperbolic equations in the one-dimensional case ($d=1$) were studied. Let $f_1, f_2 \in L_2(\mathbb{R})$. Consider the Cauchy problems
\begin{equation}
	\label{intro_Cauchy_problem_Schrod_hyperb}
	\left\{
	\begin{aligned}
		&i \frac{\partial}{\partial \tau} u_\varepsilon (x,\tau) = (A_\varepsilon u_\varepsilon)(x,\tau) ,\\
		&u_\varepsilon (x,0) = (\Upsilon_\varepsilon f_1)(x),
	\end{aligned}
	\right.
	\qquad 
	\left\{
	\begin{aligned}
		&\frac{\partial^2}{\partial \tau^2}  v_\varepsilon (x,\tau) = - (A_\varepsilon v_\varepsilon)(x,\tau) + \varepsilon^{-2} \sigma v_\varepsilon(x,\tau),\\
		&v_\varepsilon (x,0) = (\Upsilon_\varepsilon f_1)(x), \; (\partial_\tau v_\varepsilon) (x,0) = (\Upsilon_\varepsilon f_2)(x),
	\end{aligned}
	\right.
\end{equation} 
where
\begin{equation*}
	(\Upsilon_\varepsilon f)(x) \coloneqq (2 \pi)^{-1/2} \int_\mathbb{R} (\Phi f) (k) \sum_{j=s}^{\infty} e^{ikx} \varphi_j(x/\varepsilon, \varepsilon k) \chi_{\widetilde{\Omega}_{j-s+1}} (\varepsilon k) \, dk.
\end{equation*}
Here $\{e^{ikx} \varphi_j(x, k)\}_{j=s}^{\infty}$ are the Bloch waves corresponding to the bands with the numbers $j \ge s$;
$\widetilde{\Omega}_j = (-j \pi, -(j-1)\pi] \cup ((j-1)\pi, j \pi ]$, $j \in \mathbb{N}$, are the Brillouin zones. The initial data of problems~\eqref{intro_Cauchy_problem_Schrod_hyperb} are superpositions of the Bloch waves with the amplitudes, which are equal to the Fourier images $(\Phi f_1) (k)$, $(\Phi f_2) (k)$ of the functions $f_1(x)$, $f_2(x)$, and belong to the subspace $\mathcal{E}_{A_\varepsilon}[\varepsilon^{-2} \sigma, \infty) L_2(\mathbb{R})$. The following approximations were found:
\begin{align}
	\label{intro_Schrod_est}
	&\| u_\varepsilon(\cdot, \tau) - e^{-i \tau \varepsilon^{-2} \sigma} \varphi_\sigma^{\varepsilon}  u_0(\cdot, \tau)\|_{L_2(\mathbb{R})} \le C (1+|\tau|^{1/2}) \varepsilon \|f_1\|_{H^2(\mathbb{R})}, \qquad  f_1 \in H^2(\mathbb{R}),
	\\
	\label{intro_hyperb_est}
	&\begin{multlined}[c][0.9\textwidth]
		\| v_\varepsilon(\cdot, \tau) - \varphi_\sigma^{\varepsilon}  v_0(\cdot, \tau)\|_{L_2(\mathbb{R})} \le C (1+|\tau|^{1/2}) \varepsilon (\|f_1\|_{H^{3/2}(\mathbb{R})} + \|f_2\|_{H^{1/2}(\mathbb{R})}), \\ f_1 \in H^{3/2}(\mathbb{R}), \; f_2 \in H^{1/2}(\mathbb{R}).
	\end{multlined}	
\end{align}
Here $u_0$ and $v_0$ are the solutions of the effective problems
\begin{equation*}
	\left\{
	\begin{aligned}
		&i \frac{\partial}{\partial \tau} u_0 (x,\tau) = (A^\textrm{hom}_\sigma u_0)(x,\tau) ,\\
		&u_0 (x,0) = f_1 (x),
	\end{aligned}
	\right.
	\qquad
	\left\{
	\begin{aligned}
		&\frac{\partial^2}{\partial \tau^2} v_0 (x,\tau) = - (A^\textrm{hom}_\sigma v_0)(x,\tau) ,\\
		&v_0 (x,0) = f_1 (x), \; (\partial_\tau v_0) (x,0) = f_2 (x).
	\end{aligned}
	\right.
\end{equation*}
Note that estimates~\eqref{intro_Schrod_est}, \eqref{intro_hyperb_est} can be formulated in operator terms; see~\cite[(6.6), (6.21)--(6.23)]{D2022}.

\subsection{Main results}
In the present paper, we study error estimates for high-energy homogeniza\-tion of nonstationary Schr\"{o}dinger equations in the case of an arbitrary dimension. Let $(\mathbf{k}^\circ, \lambda_0)$ be an arbitrary point of the dispersion relation $\mathfrak{B}_\mathcal{A}$ of the operator $\mathcal{A} \coloneqq \mathcal{A}_1$. In particular, it may be an extremum of a band function or a point where two branches of the dispersion relation meet (they often form the so-called Dirac cone, see~\cite[Sec.~5.10]{Ku2016}). Let $\{e^{i\left\langle \mathbf{k}^\circ, \mathbf{x} \right\rangle} \varsigma_j (\mathbf{k}^\circ, \mathbf{x}) \}_{j=1}^n$ be corresponding Bloch waves; we suppose that $\bigl(\varsigma_j (\mathbf{k}^\circ, \cdot), \varsigma_k (\mathbf{k}^\circ, \cdot)\bigr)_{L_2(\Omega)} = \delta_{jk}$. We are interested in the behavior of the solutions $u_{j,\varepsilon} (\mathbf{x}, \tau)$, $\mathbf{x} \in \mathbb{R}^d$, $\tau \in \mathbb{R}$, $j = 1, \ldots,n$, of the following Cauchy problems for the nonstationary Schr\"{o}dinger equation
	\begin{equation*}
		\left\{
		\begin{aligned}
			&i \frac{\partial}{\partial \tau} u_{j,\varepsilon} (\mathbf{x},\tau) = (\mathcal{A}_\varepsilon u_{j,\varepsilon})(\mathbf{x},\tau),
			\\
			&u_{j,\varepsilon} (\mathbf{x},0) = e^{i \varepsilon^{-1} \left\langle \mathbf{k}^\circ, \mathbf{x} \right\rangle } \varsigma_j^\varepsilon(\mathbf{k}^\circ,\mathbf{x}) f_j(\mathbf{x}),
		\end{aligned}
		\right.
	\end{equation*} 
as $\varepsilon \to 0$, where $\varsigma_j^\varepsilon(\mathbf{k}^\circ,\mathbf{x}) \coloneqq \varsigma_j(\mathbf{k}^\circ,\mathbf{x}/\varepsilon)$, and $f_j(\mathbf{x})$, $j=1,\ldots,n$, are given functions. \emph{Main results of the paper} are the following estimates:
\begin{equation*}
	\| u_{j,\varepsilon} (\cdot, \tau) - u^{\mathrm{eff}}_{j,\varepsilon} (\cdot, \tau) \|_{L_2(\mathbb{R}^d)} \le \mathcal{C} (1+|\tau|) \varepsilon \|f_j\|_{H^3(\mathbb{R}^d)}, \qquad j = 1, \ldots,n,
\end{equation*}
where
\begin{equation*}
	u^{\mathrm{eff}}_{j,\varepsilon} (\mathbf{x},\tau) \coloneqq e^{i \varepsilon^{-1} \left\langle \mathbf{k}^\circ, \mathbf{x} \right\rangle } \sum_{l=1}^{n} \varsigma_l^\varepsilon (\mathbf{k}^\circ,\mathbf{x}) v^{\mathrm{eff}}_{jl, \varepsilon} (\mathbf{x},\tau) 
\end{equation*}
and $\mathbf{v}^{\mathrm{eff}}_{j, \varepsilon} (\mathbf{x},\tau) = (v^{\mathrm{eff}}_{j1, \varepsilon} (\mathbf{x},\tau), \ldots, v^{\mathrm{eff}}_{jn, \varepsilon} (\mathbf{x},\tau))^\mathrm{t}$ is the solution of the "effective" system
\begin{equation*}
	\left\{
	\begin{aligned}
		&i \frac{\partial}{\partial \tau} \mathbf{v}^{\mathrm{eff}}_{j, \varepsilon} (\mathbf{x},\tau) = \mathcal{A}^{\mathrm{eff}}_\varepsilon \mathbf{v}^{\mathrm{eff}}_{j, \varepsilon} (\mathbf{x},\tau) ,\\
		&\mathbf{v}^{\mathrm{eff}}_{j, \varepsilon} (\mathbf{x},0) = f_j (\mathbf{x}) \mathbf{e}_j.
	\end{aligned}
	\right.
\end{equation*}
Here $\mathcal{A}^{\mathrm{eff}}_\varepsilon$ is \emph{an effective operator} with \emph{constant coefficients} (its definition is given below in~\eqref{A^eff_def}, \eqref{A^eff_lp_def}), and $\mathbf{e}_j$ is the element of the canonical basis in $\mathbb{C}^n$.

\subsection{Plan of the paper}
The paper consists of Introduction and four more sections. In Sec.~\ref{ch1}, a precise definition of the operator $\mathcal{A}$ is given, its factorization is described. Next, in Sec.~\ref{ch2}, we describe a spectral expansion of the operator $\mathcal{A}$ (partial diagonalization via the Gelfand transformation). Then, in Sec.~\ref{ch3}, spectral approximations for the operator $\mathcal{A}$ in some neighbourhood of the point $(\mathbf{k}^\circ, \lambda_0) \in \mathfrak{B}_\mathcal{A}$ are obtained, and also the effective characteristics are calculated. Finally, in Sec.~\ref{ch4}, we formulate and prove the main result of the paper.

\subsection{Notation}
Let $\mathfrak{H}$ and $\mathfrak{H}_{*}$ be complex separable Hilbert spaces. The symbols $(\cdot, \cdot)_{\mathfrak{H}}$ and $ \| \cdot \|_{\mathfrak{H}}$ denote the inner product and the norm in $\mathfrak{H}$. The symbol $\| \cdot \|_{\mathfrak{H} \to \mathfrak{H}_*}$ stands for the norm of a bounded linear operator from $\mathfrak{H}$ to $\mathfrak{H}_{*}$. Sometimes we omit the indices. By $I = I_{\mathfrak{H}}$ we denote the identity operator in $\mathfrak{H}$. If $A \colon \mathfrak{H} \to \mathfrak{H}_*$ is a linear operator, then $\Dom A$ and $\Ran A$ stand for its domain and range, respectively. If $\mathfrak{N}$ is a subspace in $\mathfrak{H}$, then $\mathfrak{N}^{\perp} \coloneqq \mathfrak{H} \ominus \mathfrak{N}$. If $P$ is the orthogonal projection of $\mathfrak{H}$ onto $\mathfrak{N}$, then $P^{\perp}$ is the orthogonal projection of $\mathfrak{H}$ onto $\mathfrak{N}^{\perp}$. Next, if $A$ is a selfadjoint operator in some Hilbert space, then we use the notation $\spec A$ for the spectrum of $A$.

The symbol $\left< \cdot, \cdot \right>$ stands for the standard inner product in $\mathbb{C}^n$. For $z \in \mathbb{C}$, by $z^*$ we denote the complex conjugate number. If $a$ is an $(m \times n)$-matrix, then $a^\mathrm{t}$ denotes the transpose matrix, and $a^*$ stands for the adjoint $(n \times m)$-matrix. By $\{\mathbf{e}_j\}_{j=1}^n$ we denote the canonical basis in $\mathbb{C}^n$.

The standard $L_p$ classes of functions in a domain $\mathcal{O} \subset \mathbb{R}^d$ are denoted by $L_p (\mathcal{O})$, $1 \le p \le \infty$; $H^q (\mathcal{O})$ are the Sobolev classes of functions in a domain $\mathcal{O} \subset \mathbb{R}^d$ of order $q \in \mathbb{R}$ and integrability index~$2$. If $f$ is a measurable function, then the operator of multiplication by the function $f$ in the space $L_2$ is denoted by the same symbol.

Next, $\mathbf{x} = (x_1, \ldots , x_d) \in \mathbb{R}^d$, $i D_j = \frac{\partial}{\partial x_j}$, $j = 1,\ldots, d$, $\mathbf{D} = -i \nabla = (D_1, \ldots, D_d)$. 

By $\Phi \coloneqq \Phi_{\mathbf{x} \to \mathbf{k}}$ we denote the Fourier transform on $\mathbb{R}^d$ defined on the Schwartz class by the formula
\begin{equation*}
	(\Phi v) (\mathbf{k}) = (2 \pi)^{-d/2} \int_{\mathbb{R}} e^{-i \left\langle \mathbf{k}, \mathbf{x} \right\rangle} v(\mathbf{x}) \, d \mathbf{x}, \qquad v \in \mathcal{S}(\mathbb{R}^d),
\end{equation*}
and extended by continuity up to the unitary mapping $\Phi \colon L_2(\mathbb{R}^d) \to L_2(\mathbb{R}^d)$. For the ball of radius $\varkappa$ centered at $\mathbf{k}' \in \mathbb{R}^d$, we use the notation $\mathrm{B}_{\varkappa}(\mathbf{k}')$. 

\subsection{Acknowledgments} The author is grateful to T.~A.~Suslina for helpful discussions and attention to the work. The author is a Young Russian Mathematics award winner and would like to thank its sponsors and jury.

\section{The operator $\mathcal{A}$}
\label{ch1}
Let $\Gamma$ be a lattice in $\mathbb{R}^d$ generated by a basis $\mathbf{a}_1, \ldots , \mathbf{a}_d$: 
\begin{equation*}
	\Gamma = \Bigl\{ \mathbf{a} \in \mathbb{R}^d \colon \mathbf{a} = \sum_{j=1}^{d} n^j \mathbf{a}_j, \; n^j \in \mathbb{Z} \Bigr\},
\end{equation*}
and let $\Omega$ be the elementary cell of the lattice $\Gamma$: 
\begin{equation*}
	\Omega \coloneqq \Bigl\{ \mathbf{x} \in \mathbb{R}^d \colon \mathbf{x} = \sum_{j=1}^{d} \xi^j \mathbf{a}_j, \; 0 < \xi^j < 1 \Bigr\}.
\end{equation*}
The basis $\mathbf{b}^1, \ldots , \mathbf{b}^d$ dual to $\mathbf{a}_1, \ldots , \mathbf{a}_d$ is defined by the relations $\left< \mathbf{b}^l, \mathbf{a}_j \right> = 2 \pi \delta^l_j$. This basis generates the lattice $\widetilde \Gamma$, \emph{dual} to the lattice $\Gamma$.
By $\widetilde{\Omega}$ we denote \emph{the central Brillouin zone} of the lattice $\widetilde{\Gamma}$:
\begin{equation*}
	\operatorname{Int} \widetilde{\Omega} = \bigl\{ \mathbf{k} \in \mathbb{R}^d \colon | \mathbf{k} | < | \mathbf{k} - \mathbf{b} |, \; 0 \ne \mathbf{b} \in \widetilde \Gamma \bigr\}.
\end{equation*}
By $\widetilde{H}^1(\Omega)$ we denote the subspace of functions in $H^1(\Omega)$, whose   $\Gamma$\nobreakdash-periodic extension to $\mathbb{R}^d$ belongs to $H^1_{\mathrm{loc}}(\mathbb{R}^d)$.

In $L_2(\mathbb{R}^d)$, $d \ge 1$, we consider a selfadjoint $\Gamma$-periodic Schr\"{o}dinger operator $\mathcal{A}$ generated by the differential expression
\begin{equation}
	\label{A_with_V}
	\mathcal{A} = - \operatorname{div} \check{g} (\mathbf{x}) \nabla + V(\mathbf{x}) = \mathbf{D}^* \check{g}(\mathbf{x}) \mathbf{D} + V(\mathbf{x})
\end{equation}
with metric $\check{g} (\mathbf{x})$ and potential $V(\mathbf{x})$. It is supposed that
\begin{equation}
	\label{g_check_cond}
	\left. 
	\begin{aligned}
		\check{g} \text{ is a measurable symmetric matrix-valued function with real entries,}&\\
		\alpha_0 \mathbf{1} \le \check{g} (\mathbf{x}) \le \alpha_1 \mathbf{1}, \qquad 0 < \alpha_0 \le \alpha_1 < \infty,&
	\end{aligned}	
	\;\right\rbrace 
\end{equation}
and $V (\mathbf{x})$ is a real-valued function such that
\begin{equation*}
	V \in L_q(\Omega), \qquad q > d/2 \quad  \text{for} \quad d \ge 2, \qquad q=1 \quad \text{for} \quad d = 1.
\end{equation*}
The precise definition of the operator $\mathcal{A}$ is given in terms of the semi-bounded closed quadratic form
\begin{equation}
	\label{a_form_1}
	\mathfrak{a}[u,u] = \int_{\mathbb{R}} (\left\langle \check{g} (\mathbf{x}) \mathbf{D} u, \mathbf{D} u \right\rangle  + V(\mathbf{x}) |u(\mathbf{x})|^2) \, d \mathbf{x}, \qquad u \in H^1(\mathbb{R}^d).
\end{equation}
Adding an appropriate constant to $V$, we assume that $\inf \spec\mathcal{A} = 0$. Under this assumption the operator $\mathcal{A}$ admits a convenient factorization (see, e.g., \cite{KiSi1987}, \cite[Ch.~6, Sec.~1.1]{BSu2003}). To describe this factorization, we consider the equation
\begin{equation*}
	\mathbf{D}^* \check{g}(\mathbf{x}) \mathbf{D} \omega (\mathbf{x}) + V(\mathbf{x}) \omega (\mathbf{x}) = 0
\end{equation*}
(which is understood in the weak sense). There exists a (strictly) positive $\Gamma$\nobreakdash-periodic solution $\omega \in \widetilde{H}^1 (\Omega)$ of this equation defined up to a constant factor. This factor can be fixed so that
\begin{equation}
	\label{omega_cond1}
	\| \omega \|^2_{L_2(\Omega)} = |\Omega|.
\end{equation}

It turns out that $\omega \in C^\kappa$ for some $\kappa > 0$. Moreover, the function $\omega$ is a multiplier in $H^1 (\mathbb{R}^d)$ and in $\widetilde{H}^1 (\Omega)$. 
The substitution $u = \omega \psi$ transforms form~\eqref{a_form_1} to the form
\begin{equation}
	\label{a_form_2}
	\mathfrak{a}[u,u] = \int_{\mathbb{R}^d} \omega^2 (\mathbf{x}) \left\langle  \check{g}(\mathbf{x}) \mathbf{D}\psi, \mathbf{D}\psi \right\rangle  \, d\mathbf{x}, \qquad u = \omega \psi, \quad \psi \in H^1(\mathbb{R}^d).
\end{equation}
This yields the factorization
\begin{equation}
	\label{A}
	\mathcal{A} = \omega(\mathbf{x})^{-1} \mathbf{D}^* g(\mathbf{x}) \mathbf{D} \omega(\mathbf{x})^{-1}, \qquad g = \omega^2(\mathbf{x}) \check{g}(\mathbf{x}).
\end{equation}
We take representation~\eqref{A} of the operator $\mathcal{A}$ as the initial definition, i.e., we assume that $\mathcal{A}$ is the operator generated by form~\eqref{a_form_2}, where $\check{g}$ and $\omega$ are $\Gamma$-periodic functions satisfying~\eqref{g_check_cond}, \eqref{omega_cond1} and the conditions $\omega(\mathbf{x}) > 0$; $\omega, \omega^{-1} \in L_\infty$. We can return to representation~\eqref{A_with_V} putting $V = - \omega^{-1} (\mathbf{D}^* \check{g} \mathbf{D} \omega)$. However,
the potential $V$ may be highly singular.

\section{Spectral decomposition of the operator~$\mathcal{A}$}
\label{ch2}
We need to describe the spectrum of the operator~\eqref{A}. For this, let us introduce the objects associated with the spectral resolution of operator~\eqref{A}. Put
\begin{alignat*}{4}
	&\mathcal{H}^1(\mathcal{O}) & &= & \; &\{f \colon \omega^{-1} f \in H^1(\mathcal{O})\}, &\qquad &\text{where} \quad \mathcal{O} = \mathbb{R}^d \;\; \text{or} \;\; \Omega, \\
	&\widetilde{\mathcal{H}}^1(\Omega) & &=  & &\{f \colon \omega^{-1} f \in \widetilde{H}^1(\Omega)\}, & \qquad
	&\| f \|_{\mathcal{H}^1(\mathcal{O})} = \| \omega^{-1} f \|_{H^1(\mathcal{O})}.
\end{alignat*}
In $L_2(\Omega)$, consider the family of quadratic forms
\begin{equation}
	\label{a(k)_form}
	\mathfrak{a}(\mathbf{k})[u,u] = \int_\Omega \left\langle  g (\mathbf{x}) (\mathbf{D} + \mathbf{k}) \omega^{-1} u, (\mathbf{D} + \mathbf{k}) \omega^{-1} u \right\rangle \, d \mathbf{x}, \qquad u \in \widetilde{\mathcal{H}}^1(\Omega), \quad \mathbf{k} \in \mathbb{R}^d.
\end{equation}
The operator generated by form~\eqref{a(k)_form} is denoted by $\mathcal{A} (\mathbf{k})$. Formally, we can write
\begin{equation*}
	\mathcal{A} (\mathbf{k}) = \omega(\mathbf{x})^{-1} (\mathbf{D}+ \mathbf{k})^* g(\mathbf{x}) (\mathbf{D}+ \mathbf{k}) \omega(\mathbf{x})^{-1}.
\end{equation*} 
The parameter $\mathbf{k} \in \mathbb{R}^d$ is called \emph{the quasimomentum}. Let $E_l (\mathbf{k})$, $l \in \mathbb{N}$, be consecutive (counted with multiplicities) eigenvalues of the operator $\mathcal{A} (\mathbf{k})$, and let $\varphi_l (\cdot, \mathbf{k})$, $l \in \mathbb{N}$, be the corresponding normalized eigenfunctions:
\begin{equation}
	\label{eigenpairs_eq}
	\omega(\mathbf{x})^{-1} (\mathbf{D}+ \mathbf{k})^* g(\mathbf{x}) (\mathbf{D}+ \mathbf{k}) \omega(\mathbf{x})^{-1} \varphi_l (\mathbf{x}, \mathbf{k}) = E_l (\mathbf{k}) \varphi_l (\mathbf{x}, \mathbf{k}), \qquad l \in \mathbb{N}.
\end{equation} 
The functions $E_l (\mathbf{k})$ are called \emph{band functions}; they are $\widetilde{\Gamma}$-periodic. Next, $\varphi_l (\mathbf{x}, \mathbf{k})$ are $\Gamma$-periodic in $\mathbf{x}$, and the functions $e^{i\left\langle \mathbf{k},\mathbf{x} \right\rangle } \varphi_l (\mathbf{x}, \mathbf{k})$ can be chosen to be $\widetilde{\Gamma}$-periodic in $\mathbf{k}$.

\begin{remark}
	\label{phi_l_bound_remark}
	Multiplying~\eqref{eigenpairs_eq} by $\omega(\mathbf{x})$ from the left  and putting $\phi_l(\mathbf{x}, \mathbf{k}) \coloneqq \omega(\mathbf{x})^{-1} \varphi_l (\mathbf{x}, \mathbf{k})$, we arrive at the following equation for $\phi_l(\mathbf{x}, \mathbf{k})$\textup{:}
	\begin{equation}
		\label{eigenpairs_eq_1}
		(\mathbf{D}+ \mathbf{k})^* g(\mathbf{x}) (\mathbf{D}+ \mathbf{k}) \phi_l (\mathbf{x}, \mathbf{k}) - E_l (\mathbf{k}) \omega(\mathbf{x})^{2} \phi_l (\mathbf{x}, \mathbf{k}) = 0, \qquad l \in \mathbb{N}.
	\end{equation}
	Separating the real and imaginary parts in~\eqref{eigenpairs_eq_1}, we obtain a system of two equations with real-valued coefficients and identical principal parts. In~\textup{\cite[Ch.~VII, \S\,3, Theorem~3.1]{LaU}}, it was proved that solutions of such systems with Dirichlet conditions belong to the H\"{o}lder class as functions of $\mathbf{x}$. However, the proof carries over to the case of periodic boundary conditions without significant changes. This together with $\omega \in L_\infty$ yields $\varphi_l \in L_\infty$, $l \in \mathbb{N}$. See also~\textup{\cite[\S\,4, Sec.~9]{B1997}} and~\textup{\cite[\S\,1, Sec.~1]{BSu2004}}.
\end{remark}

Initially, the Gelfand transformation $\mathscr{G}$ is defined on functions of the Schwartz class \mbox{$v \in \mathcal{S}(\mathbb{R}^d)$} by the formula
\begin{equation*}
	\tilde{v} (\mathbf{x}, \mathbf{k}) = (\mathscr{G} \- v) (\mathbf{x}, \mathbf{k}) = | \widetilde{\Omega} |^{-1/2} \sum_{\mathbf{a} \in \Gamma} e^{- i \left< \mathbf{k}, \mathbf{x} + \mathbf{a} \right> } v ( \mathbf{x} + \mathbf{a}), \qquad \mathbf{x} \in \mathbb{R}^d, \quad \mathbf{k} \in \mathbb{R}^d.
\end{equation*}
The function $\tilde{v} (\mathbf{x}, \mathbf{k})$ is $\Gamma$\nobreakdash-periodic in $\mathbf{x}$ and $\widetilde{\Gamma}$\nobreakdash-quasiperiodic in $\mathbf{k}$ (i.e. the function $e^{i\left\langle \mathbf{x}, \mathbf{k} \right\rangle } \tilde{v} (\mathbf{x}, \mathbf{k})$ is $\widetilde{\Gamma}$\nobreakdash-periodic in $\mathbf{k}$). So, it suffices to consider $\tilde{v} (\mathbf{x}, \mathbf{k})$ for $\mathbf{x} \in \Omega$ and $\mathbf{k} \in \mathbb{T}^d$, where $\mathbb{T}^d$ is the torus $\mathbb{R}^d/\widetilde{\Gamma}$ with the induced $\mathbb{R}^d$-metric. Points of the torus $\mathbf{k} \in \mathbb{T}^d$ can be realized, for example, as points in $\widetilde{\Omega}$. The inverse transform is given by
\begin{equation}
	\label{Gelfand_inverse}
	v(\mathbf{x}) = (\mathscr{G}^{-1} \tilde{v})(\mathbf{x}) = | \widetilde{\Omega} |^{-1/2}  \int_{\mathbb{T}^d} \tilde{v} (\mathbf{x}, \mathbf{k}) e^{ i \left< \mathbf{x}, \mathbf{k} \right> } d  \mathbf{k}, \qquad \mathbf{x} \in \mathbb{R}^d.
\end{equation} 
Since $\int_{\mathbb{T}^d} \int_{\Omega} | \tilde{v} (\mathbf{x}, \mathbf{k}) |^2 d  \mathbf{x} \, d  \mathbf{k} = \int_{\mathbb{R}^d} | v ( \mathbf{x} ) |^2 d \mathbf{x}$, the transformation $\mathscr{G}$ extends by continuity up to a unitary mapping:
\begin{equation*}
	\mathscr{G} \colon L_2 (\mathbb{R}^d) \to \int_{\mathbb{T}^d} \oplus  L_2 (\Omega) \; d \mathbf{k} \eqqcolon \mathcal{K}.
\end{equation*}
The relation $v \in H^1 (\mathbb{R}^d)$ is equivalent to the fact that $\tilde{v} (\cdot, \mathbf{k}) \in \widetilde H^1 (\Omega)$ for a.e. $\mathbf{k} \in \mathbb{T}^d$ and 
\begin{equation*}
	\int_{\mathbb{T}^d} \int_{\Omega} \left( | (\mathbf{D} + \mathbf{k}) \tilde{v} (\mathbf{x}, \mathbf{k}) |^2 + | \tilde{v} (\mathbf{x}, \mathbf{k}) |^2 \right) \, d \mathbf{x} \,  d \mathbf{k} < \infty.
\end{equation*}

Under the Gelfand transformation $\mathscr{G}$, the operator of multiplication by a bounded $\Gamma$-periodic function in $L_2 (\mathbb{R}^d)$ turns into multiplication by the same function on the fibers of the direct integral $\mathcal{K}$. The operator $\mathbf{D}$ applied to $v \in H^1 (\mathbb{R}^d)$ turns into the operator  $\mathbf{D} + \mathbf{k}$ applied to $\tilde{v} (\cdot, \mathbf{k}) \in \widetilde H^1 (\Omega)$.

Under the Gelfand transformation $\mathscr{G}$ the operator $\mathcal{A}$ expands in the direct integral of the operators $\mathcal{A} (\mathbf{k})$:
\begin{equation}
	\label{Gelfand_A_decompose}
	\mathscr{G} \mathcal{A}  \mathscr{G}^{-1} = \int_{\mathbb{T}^d} \oplus \mathcal{A} (\mathbf{k}) \, d \mathbf{k}.
\end{equation}
This means the following. If $v \in \mathcal{H}^1 (\mathbb{R}^d)$, then 
\begin{gather}
	\label{Gelf_1}
	\tilde{v}(\cdot, \mathbf{k}) \in \widetilde{\mathcal{H}}^1 (\Omega) \quad \text{for a.e.} \;\; \mathbf{k} \in \mathbb{T}^d, \\
	\label{Gelf_2}
	\mathfrak{a}[v, v] = \int_{\mathbb{T}^d} \mathfrak{a}(\mathbf{k}) [\tilde{v}(\cdot, \mathbf{k}), \tilde{v}(\cdot, \mathbf{k})] \, d \mathbf{k}.
\end{gather}
Conversely, if $\tilde{v} \in \mathcal{K}$ satisfies~\eqref{Gelf_1} and the integral in~\eqref{Gelf_2} is finite, then $v \in \mathcal{H}^1 (\mathbb{R}^d)$ and~\eqref{Gelf_2} is valid. From~\eqref{Gelfand_A_decompose} it follows that the spectrum of $\mathcal{A}$ is the union of segments (spectral bands) $\Ran E_j$, $j \in \mathbb{N}$.

Introduce the operator $P_0$ acting as averaging over the cell $\Omega$: 
\begin{equation*}
	P_0 u = |\Omega|^{-1} \int_{\Omega} u(\mathbf{x}) \, d \mathbf{x}, \qquad u \in L_2(\Omega).
\end{equation*}
The operator $P_0$ is the orthogonal projection of $L_2(\Omega)$ onto the subspace of constants
\begin{equation*}
	\mathfrak{N}_0 = \{ u \in L_2(\Omega) \colon u = c \in \mathbb{C} \}.
\end{equation*}
The following relation is valid (see, e.g.,~\cite[\S\,6, Sec.~6.1]{BSu2005}):
\begin{equation}
	\label{Gelfand_Fourier}
	([P_0] \mathscr{G} u) (\mathbf{k}) = |\Omega|^{-1/2} (\Phi u) (\mathbf{k}), \qquad u \in L_2(\mathbb{R}^d), \quad \mathbf{k} \in \mathbb{R}^d.
\end{equation}
Here $[P_0]$ is the projection in $\mathcal{K}$ that acts on fibers as the operator $P_0$. Conversely, if  $\operatorname{supp} c \subset \mathrm{B}_{\varkappa}(\mathbf{k}')$ with some $\mathbf{k}' \in \mathbb{R}^d$ and sufficiently small $\varkappa$, and $c(\mathbf{k}) \in \mathfrak{N}_0$, $\mathbf{k} \in \mathrm{B}_{\varkappa}(\mathbf{k}')$, then from~\eqref{Gelfand_inverse} and the relation $|\Omega||\widetilde{\Omega}| = (2 \pi)^d$ it follows that
\begin{equation}
	\label{Gelfand_Fourier_inv}
	(\mathscr{G}^{-1} c) (\mathbf{x}) = |\Omega|^{1/2} (\Phi^* c) (\mathbf{x}).
\end{equation}
In~\eqref{Gelfand_Fourier_inv}, the points $\mathbf{k} \in \mathbb{T}^d$ are realized as points from a set $\widetilde{\Omega}_{\mathbf{k}'}$ such that $\mathrm{B}_{\varkappa}(\mathbf{k}') \subset \widetilde{\Omega}_{\mathbf{k}'}$.

Let us fix some point $\mathbf{k}^\circ \in \mathbb{T}^d$ and a number $s \in \mathbb{N}$. Put $\lambda_0 \coloneqq E_s(\mathbf{k}^\circ)$. Let $n$ be the multiplicity of the eigenvalue $\lambda_0$ of the operator $\mathcal{A}(\mathbf{k}^\circ)$, and let $d_0$ be the distance from the point $\lambda_0$ to the rest of the spectrum of  $\mathcal{A}(\mathbf{k}^\circ)$. By the continuity of the band functions, we can choose $\varkappa > 0$ such that for $|\delta \mathbf{k}| \le \varkappa$, $\delta \mathbf{k} \coloneqq \mathbf{k} - \mathbf{k}^\circ$, there are exactly $n$ eigenvalues (counted with multiplicities) of the operator $\mathcal{A}(\mathbf{k})$ on the segment $[\lambda_0 - d_0/3, \lambda_0 + d_0/3]$, and 
\begin{equation*}
	\bigl([\lambda_0 - 2d_0/3, \lambda_0 - d_0/3] \cup [\lambda_0 + d_0/3, \lambda_0 + 2d_0/3]\bigr) \cap \spec \mathcal{A}(\mathbf{k}) = \varnothing.
\end{equation*}
Introduce the notation $\mathfrak{N} \coloneqq \Ker (\mathcal{A}(\mathbf{k}^\circ) - \lambda_0 I)$. Let $P$ be the orthogonal projection of $L_2(\Omega)$ onto $\mathfrak{N}$; by $F(\mathbf{k})$ we denote the spectral projection of the operator $\mathcal{A}(\mathbf{k})$ corresponding to the segment $[\lambda_0 - d_0/3, \lambda_0 + d_0/3]$.

\section{Threshold approximations}
\label{ch3}
\subsection{Approximations for $F(\mathbf{k})$ and $\mathcal{A}(\mathbf{k}) F(\mathbf{k})$}
In this section, we want to find approximations for the operators $F(\mathbf{k})$ and $\mathcal{A}(\mathbf{k}) F(\mathbf{k})$ for $|\delta \mathbf{k}| \le \varkappa$. For this, we shall integrate the difference of the resolvents for $\mathcal{A}(\mathbf{k})$ and $\mathcal{A}(\mathbf{k}^\circ)$ along an appropriate contour (see, e.g.,~\cite[Ch.~1, Sec.~1.7, \S\S\,2, 3]{BSu2003}, \cite[\S\,4, Sec.~4.2, the third method]{PiSlSuZh2022}). Here we apply the method of~\cite{PiSlSuZh2022}. However, there is the complication that the (standard) second resolvent identity is not applicable, because, in general, the difference $\mathcal{A}(\mathbf{k}) - \mathcal{A}(\mathbf{k}^\circ)$ makes no sense. In order to overcome this difficulty, we use the following lemma. (Here and throughout this section we drop the indices in the inner product and the norm in $L_2(\Omega)$.)
\begin{lemma}
	We have
	\begin{equation}
		\label{resolv_ident_forms}
		\begin{multlined}[c][0.8\textwidth]
			\bigl(\bigl((\mathcal{A}(\mathbf{k})- \zeta I)^{-1} - (\mathcal{A}(\mathbf{k}^\circ)- \zeta I)^{-1} \bigr) \eta, \vartheta\bigr) \\ = -\bigl(\mathfrak{a}(\mathbf{k})- \mathfrak{a}(\mathbf{k}^\circ)\bigr)[(\mathcal{A}(\mathbf{k})- \zeta I)^{-1} \eta, (\mathcal{A}(\mathbf{k}^\circ)- \zeta^* I)^{-1} \vartheta], \\ \eta, \vartheta \in L_2(\Omega), \; \zeta \in \rho(\mathcal{A}(\mathbf{k})) \cap \rho(\mathcal{A}(\mathbf{k}^\circ)).
		\end{multlined}
	\end{equation}
\end{lemma}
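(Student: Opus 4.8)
\emph{Proof proposal.} The plan is to reduce the asserted identity to the first representation theorem relating the forms $\mathfrak{a}(\mathbf{k})$, $\mathfrak{a}(\mathbf{k}^\circ)$ to the operators $\mathcal{A}(\mathbf{k})$, $\mathcal{A}(\mathbf{k}^\circ)$. First I would fix $\eta, \vartheta \in L_2(\Omega)$ and $\zeta \in \rho(\mathcal{A}(\mathbf{k})) \cap \rho(\mathcal{A}(\mathbf{k}^\circ))$ and set
\[
	u \coloneqq (\mathcal{A}(\mathbf{k})- \zeta I)^{-1}\eta \in \Dom \mathcal{A}(\mathbf{k}), \qquad
	w \coloneqq (\mathcal{A}(\mathbf{k}^\circ)- \zeta^* I)^{-1}\vartheta \in \Dom \mathcal{A}(\mathbf{k}^\circ).
\]
Since $\mathcal{A}(\mathbf{k})$ and $\mathcal{A}(\mathbf{k}^\circ)$ are generated by closed forms sharing the common form domain $\widetilde{\mathcal{H}}^1(\Omega)$, and the operator domains are contained in this form domain, both $u$ and $w$ belong to $\widetilde{\mathcal{H}}^1(\Omega)$; hence $\mathfrak{a}(\mathbf{k})[u,w]$ and $\mathfrak{a}(\mathbf{k}^\circ)[u,w]$, and therefore their difference occurring on the right-hand side, are well defined, even though the operator difference $\mathcal{A}(\mathbf{k}) - \mathcal{A}(\mathbf{k}^\circ)$ need not make sense. (That $\mathfrak{a}(\mathbf{k}) - \mathfrak{a}(\mathbf{k}^\circ)$ is in fact bounded with respect to the $\widetilde{\mathcal{H}}^1(\Omega)$-norm can be seen directly from~\eqref{a(k)_form} by writing $(\mathbf{D}+\mathbf{k})\omega^{-1}u = (\mathbf{D}+\mathbf{k}^\circ)\omega^{-1}u + (\delta\mathbf{k})\,\omega^{-1}u$ and expanding.)

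Next I would rewrite the left-hand side. By self-adjointness of $\mathcal{A}(\mathbf{k}^\circ)$ one has $\bigl((\mathcal{A}(\mathbf{k}^\circ)- \zeta I)^{-1}\bigr)^* = (\mathcal{A}(\mathbf{k}^\circ)- \zeta^* I)^{-1}$, so $\bigl((\mathcal{A}(\mathbf{k}^\circ)- \zeta I)^{-1}\eta, \vartheta\bigr) = (\eta, w)$ and the left-hand side equals $(u,\vartheta) - (\eta, w)$. Now substitute $\vartheta = (\mathcal{A}(\mathbf{k}^\circ)- \zeta^* I)w$ and $\eta = (\mathcal{A}(\mathbf{k})- \zeta I)u$ and apply the representation theorem: since $w \in \Dom\mathcal{A}(\mathbf{k}^\circ)$ and $u \in \widetilde{\mathcal{H}}^1(\Omega)$,
\[
	(u,\vartheta) = (u, \mathcal{A}(\mathbf{k}^\circ)w) - \zeta(u,w) = \mathfrak{a}(\mathbf{k}^\circ)[u,w] - \zeta(u,w),
\]
while since $u \in \Dom\mathcal{A}(\mathbf{k})$ and $w \in \widetilde{\mathcal{H}}^1(\Omega)$,
\[
	(\eta, w) = (\mathcal{A}(\mathbf{k})u, w) - \zeta(u,w) = \mathfrak{a}(\mathbf{k})[u,w] - \zeta(u,w).
\]
Subtracting, the terms $\zeta(u,w)$ cancel, and the left-hand side becomes $\mathfrak{a}(\mathbf{k}^\circ)[u,w] - \mathfrak{a}(\mathbf{k})[u,w] = -\bigl(\mathfrak{a}(\mathbf{k}) - \mathfrak{a}(\mathbf{k}^\circ)\bigr)[u,w]$, which is exactly the right-hand side once the definitions of $u$ and $w$ are restored.

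The computation is essentially routine; the only points needing care are the Hermitian symmetry of the forms (so that, e.g., $(u, \mathcal{A}(\mathbf{k}^\circ)w) = \mathfrak{a}(\mathbf{k}^\circ)[u,w]$ holds for $u$ merely in the form domain) and the placement of the conjugate $\zeta^*$ — which is precisely why the resolvent of $\mathcal{A}(\mathbf{k}^\circ)$ must be evaluated at $\zeta^*$ on the right-hand side, so that the two $\zeta(u,w)$ terms cancel. I do not expect a genuine obstacle here; the lemma is isolated only because the identity must be phrased through the forms $\mathfrak{a}(\mathbf{k})$, $\mathfrak{a}(\mathbf{k}^\circ)$ rather than through the (ill-defined) operator difference, and this is exactly what makes it usable in the contour-integration argument of Section~\ref{ch3}.
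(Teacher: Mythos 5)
Your proposal is correct and follows essentially the same route as the paper: both arguments rest on the identity $\mathcal{A}(\mathbf{k})(\mathcal{A}(\mathbf{k})-\zeta I)^{-1}=I+\zeta(\mathcal{A}(\mathbf{k})-\zeta I)^{-1}$, the first representation theorem on the common form domain $\widetilde{\mathcal{H}}^1(\Omega)$, and the cancellation of the two $\zeta(u,w)$ terms. The only difference is cosmetic — the paper expands the form difference $(\mathfrak{a}(\mathbf{k})-\mathfrak{a}(\mathbf{k}^\circ))[u,v]$ and reads off the left-hand side, whereas you expand the left-hand side and read off the form difference.
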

\begin{proof}
	Consider the form $(\mathfrak{a}(\mathbf{k})- \mathfrak{a}(\mathbf{k}^\circ))[u, v]$ on the elements $u = (\mathcal{A}(\mathbf{k})- \zeta I)^{-1} \eta$ and $v = (\mathcal{A}(\mathbf{k}^\circ) - \zeta^* I)^{-1} \vartheta$, where $\eta, \vartheta \in L_2(\Omega)$, $\zeta \in \rho(\mathcal{A}(\mathbf{k})) \cap \rho(\mathcal{A}(\mathbf{k}^\circ))$.  Obviously, $(\mathcal{A}(\mathbf{k})- \zeta I)^{-1} \eta \in \Dom\mathcal{A}(\mathbf{k})$ and $\mathcal{A}(\mathbf{k}) (\mathcal{A}(\mathbf{k})- \zeta I)^{-1} = I + \zeta (\mathcal{A}(\mathbf{k})- \zeta I)^{-1}$, whence
	\begin{multline*}
		(\mathfrak{a}(\mathbf{k})- \mathfrak{a}(\mathbf{k}^\circ))[u, v] = (\mathcal{A}(\mathbf{k})u, v) - (u, \mathcal{A}(\mathbf{k}^\circ)v) = (\eta, (\mathcal{A}(\mathbf{k}^\circ)- \zeta^* I)^{-1} \vartheta) - ((\mathcal{A}(\mathbf{k})- \zeta I)^{-1} \eta, \vartheta) \\ +
		(\zeta (\mathcal{A}(\mathbf{k})- \zeta I)^{-1} \eta, (\mathcal{A}(\mathbf{k}^\circ)- \zeta^* I)^{-1} \vartheta) - ((\mathcal{A}(\mathbf{k})- \zeta I)^{-1} \eta,\zeta^* (\mathcal{A}(\mathbf{k}^\circ)- \zeta^* I)^{-1} \vartheta).
	\end{multline*}
	The last two terms cancel out, which yields~\eqref{resolv_ident_forms}.
\end{proof}
Denote
\begin{equation*}
	R(\mathbf{k}, \zeta) \coloneqq (\mathcal{A}(\mathbf{k})- \zeta I)^{-1}, \qquad R_0(\zeta) \coloneqq R(\mathbf{k}^\circ, \zeta) = (\mathcal{A}(\mathbf{k}^\circ)- \zeta I)^{-1}.
\end{equation*}
Let $\gamma$ be a contour on the complex plane that is equidistant to the interval $[\lambda_0 - d_0/3, \lambda_0 + d_0/3]$ and passes through the point $\lambda_0 + d_0/2$. Its length is equal to
\begin{equation*}
	l_\gamma = \frac{\pi + 4}{3} d_0.
\end{equation*}
The resolvent on this contour satisfies the estimates
\begin{equation}
	\label{R_R0_est}
	\| R(\mathbf{k}, \zeta) \| \le 6 d_0^{-1}, \qquad \| R_0(\zeta) \| \le 6 d_0^{-1}, \qquad |\delta \mathbf{k}| \le \varkappa, \quad \zeta \in \gamma.
\end{equation}

Passing from forms to operators, we rewrite identity~\eqref{resolv_ident_forms} as
\begin{equation}
	\label{resolv_ident}
	R(\mathbf{k}, \zeta) =  R_0(\zeta) - \mathcal{Y}_0(\delta \mathbf{k}, \zeta^*)^* \mathcal{X}(\mathbf{k}, \zeta) 
	- \mathcal{X}_0(\zeta^*)^* \mathcal{Y}(\delta \mathbf{k}, \mathbf{k}, \zeta) 
	- \mathcal{Y}_0(\delta \mathbf{k}, \zeta^*)^* \mathcal{Y}(\delta \mathbf{k}, \mathbf{k}, \zeta),
\end{equation}
where
\begin{equation}
	\label{X_X0_Y_Y0}
	\begin{alignedat}{4}
		&\mathcal{X}(\mathbf{k}, \zeta) && =  g^{1/2} (\mathbf{D} + \mathbf{k}^\circ) \omega^{-1} R(\mathbf{k}, \zeta), \qquad 
		&&\mathcal{Y}(\delta \mathbf{k}, \mathbf{k}, \zeta) && =  g^{1/2} (\delta \mathbf{k}) \omega^{-1} R(\mathbf{k}, \zeta), 
		\\
		&\mathcal{X}_0( \zeta) && =  g^{1/2} (\mathbf{D} + \mathbf{k}^\circ) \omega^{-1} R_0(\zeta), \qquad 
		&&\mathcal{Y}_0(\delta \mathbf{k}, \zeta) && =  g^{1/2} (\delta \mathbf{k}) \omega^{-1} R_0(\zeta).
	\end{alignedat}
\end{equation}
Let us estimate the norms of the operators $\mathcal{X}(\mathbf{k}, \zeta)$, $\mathcal{X}_0(\zeta)$, $\mathcal{Y}(\delta \mathbf{k}, \mathbf{k}, \zeta)$ and $\mathcal{Y}_0(\delta \mathbf{k}, \zeta)$ for \mbox{$|\delta \mathbf{k}| \le \varkappa$}, $\zeta \in \gamma$. Clearly,
\begin{align}
	\label{Y_Y0_est}
	&\| \mathcal{Y}(\delta \mathbf{k}, \mathbf{k}, \zeta) \| \le C_1 |\delta \mathbf{k}|, \qquad \| \mathcal{Y}_0(\delta \mathbf{k}, \zeta) \| \le C_1 |\delta \mathbf{k}|, \qquad |\delta \mathbf{k}| \le \varkappa, \quad \zeta \in \gamma; 
	\\ 
	\notag
	&C_1 \coloneqq 6 \|g\|_{L_\infty}^{1/2} \| \omega^{-1} \|_{L_\infty} d_0^{-1}.  
\end{align}
Next, using the identity $\mathcal{A}(\mathbf{k}) R(\mathbf{k}, \zeta) = I + \zeta R(\mathbf{k}, \zeta)$ and taking~\eqref{R_R0_est} into account, we get
\begin{align}
	\label{AR_est}
		&\| \mathcal{A}(\mathbf{k}) R(\mathbf{k}, \zeta) \| \le 1 + (\lambda_0 + d_0/2)(6 d_0^{-1}) = 4 + 6\lambda_0 d_0^{-1}, 
		\qquad | \delta \mathbf{k} | \le \varkappa, \quad  \zeta \in \gamma;
	\\
	\notag
	&\begin{multlined}[c][0.8\textwidth]
		\| \mathcal{A}(\mathbf{k})^{1/2} R(\mathbf{k}, \zeta) u\|^2 = (\mathcal{A}(\mathbf{k}) R(\mathbf{k}, \zeta) u, R(\mathbf{k}, \zeta) u) \le (24 d_0^{-1} +  36 \lambda_0 d_0^{-2}) \|u\|^2, 
		\\
		u \in L_2(\Omega), \; | \delta \mathbf{k} | \le \varkappa, \;  \zeta \in \gamma;
	\end{multlined}
\end{align}
whence
\begin{align}
	\label{X_est}
	&\begin{multlined}[c][0.9\textwidth]
		\| \mathcal{X}(\mathbf{k}, \zeta)\| \le \| g^{1/2} (\mathbf{D}+\mathbf{k}) \omega^{-1} R(\mathbf{k}, \zeta)\| + \| g^{1/2} (\delta \mathbf{k}) \omega^{-1} R(\mathbf{k}, \zeta)\|
		\\
		= \|\mathcal{A}(\mathbf{k})^{1/2} R(\mathbf{k}, \zeta)\| + \|g^{1/2} (\delta \mathbf{k})\omega^{-1} R(\mathbf{k}, \zeta)\| \\ \le  (24 d_0^{-1} +  36 \lambda_0 d_0^{-2})^{1/2} + 6 \| g\|_{L_\infty}^{1/2} \| \omega^{-1} \|_{L_\infty} \varkappa d_0^{-1} \eqqcolon \check{C}_2, \qquad
		| \delta \mathbf{k} | \le \varkappa, \; \zeta \in \gamma;
	\end{multlined}
	\\
	\label{X0_est}
	&\| \mathcal{X}_0(\zeta)\| \le (24 d_0^{-1} +  36 \lambda_0 d_0^{-2})^{1/2} \eqqcolon C_2, \qquad | \delta \mathbf{k} | \le \varkappa, \; \zeta \in \gamma.
\end{align}

Now, iterating, we apply identity~\eqref{resolv_ident} for the resolvent $R(\mathbf{k}, \zeta)$, contained in the terms $\mathcal{Y}_0(\delta \mathbf{k}, \zeta^*)^* \mathcal{X}(\mathbf{k}, \zeta)$ and
$\mathcal{X}_0(\zeta^*)^* \mathcal{Y}(\delta \mathbf{k}, \mathbf{k}, \zeta)$ in~\eqref{resolv_ident}. Thus, the terms of order $|\delta \mathbf{k}|$ will not contain $R(\mathbf{k}, \zeta)$:
\begin{equation}
	\label{resolv_iterat_1}
	R(\mathbf{k}, \zeta) 
	= R_0(\zeta) 
	- \mathcal{Y}_0(\delta \mathbf{k}, \zeta^*)^* \mathcal{X}_0( \zeta) 
	- \mathcal{X}_0( \zeta^*)^* \mathcal{Y}_0(\delta \mathbf{k}, \zeta)
	+ \mathscr{R}_1(\delta \mathbf{k}, \mathbf{k}, \zeta).
\end{equation}
Here $\mathscr{R}_1(\delta \mathbf{k}, \mathbf{k}, \zeta)$ is defined by the expression
\begin{multline*}
	\mathscr{R}_1(\delta \mathbf{k}, \mathbf{k}, \zeta) =  
	\mathcal{Y}_0(\delta \mathbf{k}, \zeta^*)^* \mathcal{X}_0( \zeta) \check{\mathcal{Y}}(\delta \mathbf{k})^* \mathcal{X}(\mathbf{k}, \zeta) 
	+ \mathcal{Y}_0(\delta \mathbf{k}, \zeta^*)^* \check{\mathcal{X}}_0(\zeta) \mathcal{Y}(\delta \mathbf{k}, \mathbf{k}, \zeta)
	\\
	+ \mathcal{Y}_0(\delta \mathbf{k}, \zeta^*)^* \mathcal{X}_0(\zeta) \check{\mathcal{Y}}(\delta \mathbf{k})^*  \mathcal{Y}(\delta \mathbf{k}, \mathbf{k}, \zeta)
	+ \mathcal{X}_0(\zeta^*)^* \mathcal{Y}_0(\delta \mathbf{k}, \zeta) \check{\mathcal{Y}}(\delta \mathbf{k})^*  \mathcal{X}(\mathbf{k}, \zeta) 
	\\
	+ \mathcal{X}_0(\zeta^*)^* \check{\mathcal{Y}}(\delta \mathbf{k})   \mathcal{X}_0(\zeta^*)^* \mathcal{Y}(\delta \mathbf{k}, \mathbf{k}, \zeta) 
	+ \mathcal{X}_0(\zeta^*)^* \mathcal{Y}_0(\delta \mathbf{k}, \zeta) \check{\mathcal{Y}}(\delta \mathbf{k})^* \mathcal{Y}(\delta \mathbf{k}, \mathbf{k}, \zeta) 
	- \mathcal{Y}_0(\delta \mathbf{k}, \zeta^*)^* \mathcal{Y}(\delta \mathbf{k}, \mathbf{k}, \zeta).
\end{multline*}
We have denoted
\begin{equation}
	\label{X0_check_Y_check}
	\check{\mathcal{X}}_0 (\zeta) \coloneqq g^{1/2} (\mathbf{D}+\mathbf{k}^\circ) \omega^{-1} \bigl(g^{1/2} (\mathbf{D}+\mathbf{k}^\circ) \omega^{-1} R_0(\zeta^*)\bigr)^*, \qquad \check{\mathcal{Y}}(\delta \mathbf{k}) \coloneqq g^{1/2}  (\delta \mathbf{k}) \omega^{-1}.
\end{equation}
Let us estimate the norms of the operators $\check{\mathcal{X}}_0 (\zeta)$ and $\check{\mathcal{Y}}(\delta \mathbf{k})$. Write $\check{\mathcal{X}}_0 (\zeta)$ as
\begin{multline*}
	\check{\mathcal{X}}_0 (\zeta) = g^{1/2} (\mathbf{D}+\mathbf{k}^\circ) \omega^{-1} \bigl(g^{1/2} (\mathbf{D}+\mathbf{k}^\circ) \omega^{-1} R_0(\zeta^*) \mathcal{A}(\mathbf{k}^\circ)^{1/2} \mathcal{A}(\mathbf{k}^\circ)^{-1/2}\bigr)^* 
	\\ = g^{1/2} (\mathbf{D}+\mathbf{k}^\circ) \omega^{-1} \mathcal{A}(\mathbf{k}^\circ)^{-1/2} \bigl(g^{1/2} (\mathbf{D}+\mathbf{k}^\circ) \omega^{-1} R_0(\zeta^*) \mathcal{A}(\mathbf{k}^\circ)^{1/2} \bigr)^*
	\\ = g^{1/2} (\mathbf{D}+\mathbf{k}^\circ) \omega^{-1} \mathcal{A}(\mathbf{k}^\circ)^{-1/2} \bigl(g^{1/2} (\mathbf{D}+\mathbf{k}^\circ) \omega^{-1}  \mathcal{A}(\mathbf{k}^\circ)^{-1/2} \mathcal{A}(\mathbf{k}^\circ) R_0(\zeta^*) \bigr)^*.
\end{multline*}
We have
\begin{equation*}
	\| \check{\mathcal{X}}_0 (\zeta) \| \le \| g^{1/2} (\mathbf{D}+\mathbf{k}^\circ) \omega^{-1} \mathcal{A}(\mathbf{k}^\circ)^{-1/2} \|^2 \| \mathcal{A}(\mathbf{k}^\circ) R_0(\zeta^*)\|.
\end{equation*}
By~\eqref{AR_est} and the identity $\| g^{1/2} (\mathbf{D}+\mathbf{k}^\circ) \omega^{-1} \mathcal{A}(\mathbf{k}^\circ)^{-1/2} \| = 1$,
\begin{equation}
	\label{X0_check_est}
		\| \check{\mathcal{X}}_0 (\zeta) \| \le 4 + 6 \lambda_0 d_0^{-1} \eqqcolon C_3,
		\qquad 
		| \delta \mathbf{k} | \le \varkappa, \; \zeta \in \gamma.
\end{equation}
Next, obviously, 
\begin{equation}
	\label{Y_check_est}
	\| \check{\mathcal{Y}}(\delta \mathbf{k}) \| \le C_4 |\delta \mathbf{k}|, \quad C_4 \coloneqq \|g\|_{L_\infty}^{1/2} \| \omega^{-1} \|_{L_\infty}, \qquad | \delta \mathbf{k} | \le \varkappa, \quad \zeta \in \gamma.
\end{equation}

In order to get rid of the resolvent $R(\mathbf{k}, \zeta)$ in the terms of order $|\delta \mathbf{k}|^2$, we apply identity~\eqref{resolv_ident} once again:
\begin{equation}
	\label{resolv_iterat_2}
	\begin{multlined}
		R(\mathbf{k}, \zeta) = 
		R_0(\zeta) 
		- \mathcal{Y}_0(\delta \mathbf{k}, \zeta^*)^* \mathcal{X}_0( \zeta) 
		- \mathcal{X}_0( \zeta^*)^* \mathcal{Y}_0(\delta \mathbf{k}, \zeta)
		- \mathcal{Y}_0(\delta \mathbf{k}, \zeta^*)^* \mathcal{Y}_0(\delta \mathbf{k}, \zeta) 
		\\
		+ \mathcal{Y}_0(\delta \mathbf{k}, \zeta^*)^* \mathcal{X}_0( \zeta) \check{\mathcal{Y}}(\delta \mathbf{k})^* \mathcal{X}_0( \zeta) 
		+ \mathcal{Y}_0(\delta \mathbf{k}, \zeta^*)^* \check{\mathcal{X}}_0( \zeta) \mathcal{Y}_0(\delta \mathbf{k}, \zeta)
		\\ 
		+ \mathcal{X}_0( \zeta^*)^* \mathcal{Y}_0(\delta \mathbf{k}, \zeta) \check{\mathcal{Y}}(\delta \mathbf{k})^* \mathcal{X}_0( \zeta)
		+ \mathcal{X}_0( \zeta^*)^* \check{\mathcal{Y}}(\delta \mathbf{k}) \mathcal{X}_0( \zeta^*)^* \mathcal{Y}_0(\delta \mathbf{k}, \zeta)
		+ \mathscr{R}_2(\delta \mathbf{k}, \mathbf{k}, \zeta),
	\end{multlined}
\end{equation}
where
\begin{multline*}
	\mathscr{R}_2(\delta \mathbf{k}, \mathbf{k}, \zeta) = \mathcal{Y}_0(\delta \mathbf{k}, \zeta^*)^* \mathcal{X}_0(\zeta) \check{\mathcal{Y}}(\delta \mathbf{k})^*  \mathcal{Y}(\delta \mathbf{k}, \mathbf{k}, \zeta) + \mathcal{X}_0(\zeta^*)^* \mathcal{Y}_0(\delta \mathbf{k}, \zeta) \check{\mathcal{Y}}(\delta \mathbf{k})^* \mathcal{Y}(\delta \mathbf{k}, \mathbf{k}, \zeta)
	\\
	- \mathcal{Y}_0(\delta \mathbf{k}, \zeta^*)^* \mathcal{X}_0( \zeta) \check{\mathcal{Y}}(\delta \mathbf{k})^* \mathcal{X}_0( \zeta) \check{\mathcal{Y}}(\delta \mathbf{k})^* \mathcal{X}(\mathbf{k}, \zeta)
	- \mathcal{Y}_0(\delta \mathbf{k}, \zeta^*)^* \mathcal{X}_0( \zeta) \check{\mathcal{Y}}(\delta \mathbf{k})^* \check{\mathcal{X}}_0( \zeta) \mathcal{Y}(\delta \mathbf{k}, \mathbf{k}, \zeta) 
	\\
	- \mathcal{Y}_0(\delta \mathbf{k}, \zeta^*)^* \mathcal{X}_0( \zeta) \check{\mathcal{Y}}(\delta \mathbf{k})^* \mathcal{X}_0( \zeta) \check{\mathcal{Y}}(\delta \mathbf{k})^* \mathcal{Y}(\delta \mathbf{k}, \mathbf{k}, \zeta) 
	-  \mathcal{Y}_0(\delta \mathbf{k}, \zeta^*)^* \check{\mathcal{X}}_0( \zeta) \check{\mathcal{Y}}(\delta \mathbf{k}) \mathcal{Y}_0(\delta \mathbf{k}, \zeta^*)^* \mathcal{X}(\mathbf{k}, \zeta)
	\\
	-  \mathcal{Y}_0(\delta \mathbf{k}, \zeta^*)^* \check{\mathcal{X}}_0( \zeta) \check{\mathcal{Y}}(\delta \mathbf{k}) \mathcal{X}_0( \zeta^*)^* \mathcal{Y}(\delta \mathbf{k}, \mathbf{k}, \zeta) 
	-  \mathcal{Y}_0(\delta \mathbf{k}, \zeta^*)^* \check{\mathcal{X}}_0( \zeta) \check{\mathcal{Y}}(\delta \mathbf{k}) \mathcal{Y}_0(\delta \mathbf{k}, \zeta^*)^* \mathcal{Y}(\delta \mathbf{k}, \mathbf{k}, \zeta)
	\\
	- \mathcal{X}_0( \zeta^*)^*  \mathcal{Y}_0(\delta \mathbf{k}, \zeta) \check{\mathcal{Y}}(\delta \mathbf{k})^* \mathcal{X}_0( \zeta) \check{\mathcal{Y}}(\delta \mathbf{k}) \mathcal{X}(\mathbf{k}, \zeta) 
	- \mathcal{X}_0( \zeta^*)^*  \mathcal{Y}_0(\delta \mathbf{k}, \zeta) \check{\mathcal{Y}}(\delta \mathbf{k})^* \check{\mathcal{X}}_0( \zeta) \mathcal{Y}(\delta \mathbf{k}, \mathbf{k}, \zeta) 
	\\
	- \mathcal{X}_0( \zeta^*)^*  \mathcal{Y}_0(\delta \mathbf{k}, \zeta) \check{\mathcal{Y}}(\delta \mathbf{k})^* \mathcal{X}_0( \zeta) \check{\mathcal{Y}}(\delta \mathbf{k})^* \mathcal{Y}(\delta \mathbf{k}, \mathbf{k}, \zeta)  
	- \mathcal{X}_0( \zeta^*)^* \check{\mathcal{Y}}(\delta \mathbf{k}) \mathcal{X}_0( \zeta^*)^*  \check{\mathcal{Y}}(\delta \mathbf{k}) \mathcal{Y}_0(\delta \mathbf{k}, \zeta^*)^* \mathcal{X}(\mathbf{k}, \zeta)
	\\	
	- \mathcal{X}_0( \zeta^*)^* \check{\mathcal{Y}}(\delta \mathbf{k}) \mathcal{X}_0( \zeta^*)^* \check{\mathcal{Y}}(\delta \mathbf{k}) \mathcal{X}_0( \zeta^*)^* \mathcal{Y}(\delta \mathbf{k}, \mathbf{k}, \zeta) 
	- \mathcal{X}_0( \zeta^*)^* \check{\mathcal{Y}}(\delta \mathbf{k}) \mathcal{X}_0( \zeta^*)^* \check{\mathcal{Y}}(\delta \mathbf{k}) \mathcal{Y}_0(\delta \mathbf{k}, \zeta^*)^* \mathcal{Y}(\delta \mathbf{k}, \mathbf{k}, \zeta) 
	\\
	+ \mathcal{Y}_0(\delta \mathbf{k}, \zeta^*)^* \check{\mathcal{Y}}(\delta \mathbf{k}) \mathcal{Y}_0(\delta \mathbf{k}, \zeta^*)^* \mathcal{X}(\mathbf{k}, \zeta)
	+ \mathcal{Y}_0(\delta \mathbf{k}, \zeta^*)^* \check{\mathcal{Y}}(\delta \mathbf{k})  \mathcal{X}_0( \zeta^*)^* \mathcal{Y}(\delta \mathbf{k}, \mathbf{k}, \zeta) 
	\\
	+ \mathcal{Y}_0(\delta \mathbf{k}, \zeta^*)^* \check{\mathcal{Y}}(\delta \mathbf{k}) \mathcal{Y}_0(\delta \mathbf{k}, \zeta^*)^* \mathcal{Y}(\delta \mathbf{k}, \mathbf{k}, \zeta).
\end{multline*}

The operators $\mathscr{R}_1(\delta \mathbf{k}, \mathbf{k}, \zeta)$ and $\mathscr{R}_2(\delta \mathbf{k}, \mathbf{k}, \zeta)$ satisfy the estimates
\begin{gather}
	\label{scrR1_scrR2_est}
	\| \mathscr{R}_1(\delta \mathbf{k}, \mathbf{k}, \zeta) \| \le C_5 | \delta \mathbf{k} |^2, \qquad \| \mathscr{R}_2(\delta \mathbf{k}, \mathbf{k}, \zeta) \| \le C_6 | \delta \mathbf{k} |^3, \qquad | \delta \mathbf{k} | \le \varkappa, \; \zeta \in \gamma;
	\\
	\notag
	C_5 = 2 C_1 C_2 \check{C}_2 C_4 + C_1^2 C_3 + 2 C_1^2 C_2 C_4 \varkappa + C_1 C_2^2 C_4 + C_1^2, 
	\\
	\notag
	\begin{multlined}
		C_6 
		= 3 C_1^2 C_2 C_4 
		+ 3 C_1 C_2^2 \check{C}_2 C_4^2 
		+ 3 C_1^2 C_2 C_3 C_4 
		+ 3 C_1^2 C_2^2 C_4^2 \varkappa 
		\\
		+ C_1^2 \check{C}_2 C_3 C_4 
		+ C_1^3 C_3 C_4 \varkappa
		+ C_1 C_2^3 C_4^2 
		+ C_1^2 \check{C}_2 C_4   
		+ C_1^3 C_4 \varkappa.
	\end{multlined}
\end{gather}
In this section, our goal is to find approximations for $F(\mathbf{k})$ and $\mathcal{A}(\mathbf{k}) F(\mathbf{k})$. Let us start with the operator $F(\mathbf{k})$. By virtue of Riesz--Dunford operator calculus,
\begin{equation}
	\label{F(k)_integral}
	F(\mathbf{k}) = \frac{-1}{2 \pi i} \ointctrclockwise_\gamma (\mathcal{A}(\mathbf{k})- \zeta I)^{-1} d \zeta.
\end{equation}
Substituting~\eqref{resolv_ident} into~\eqref{F(k)_integral} and using relations~\eqref{Y_Y0_est}, \eqref{X_est}, \eqref{X0_est}, and the identity \mbox{$P = \frac{-1}{2 \pi i} \ointctrclockwise_\gamma (\mathcal{A}(\mathbf{k}^\circ)- \zeta I)^{-1} d \zeta$}, we obtain the following result:
\begin{gather}
	\label{F(k)_threshold_1}
	\| F(\mathbf{k}) - P \| \le C_7 |\delta \mathbf{k}|, \qquad |\delta \mathbf{k}| \le \varkappa;
	\\
	\notag
	C_7 = (2 \pi)^{-1} l_\gamma (C_1 C_2 + C_1 \check{C}_2 + C_1^2 \varkappa).
\end{gather}
We also need more precise approximation for the projector $F(\mathbf{k})$. For this, we substitute~\eqref{resolv_iterat_1} into~\eqref{F(k)_integral}:
\begin{gather}
	\label{F(k)_threshold_2}
	F(\mathbf{k}) = P + F_1(\delta \mathbf{k}) + \Phi(\delta \mathbf{k}, \mathbf{k}), \qquad |\delta \mathbf{k}| \le \varkappa;
	\\
	\label{F1_integral}
	F_1(\delta \mathbf{k}) \coloneqq \frac{1}{2 \pi i} \ointctrclockwise_\gamma \bigl(\mathcal{Y}_0(\delta \mathbf{k}, \zeta^*)^* \mathcal{X}_0( \zeta) 	+ \mathcal{X}_0( \zeta^*)^* \mathcal{Y}_0(\delta \mathbf{k}, \zeta) \bigr) d\zeta, 
	\\
	\notag
	\Phi(\delta \mathbf{k}, \mathbf{k}) \coloneqq \frac{-1}{2 \pi i} \ointctrclockwise_\gamma \mathscr{R}_1(\delta \mathbf{k}, \mathbf{k}, \zeta) d\zeta.
\end{gather}
Calculate the integral in the expression for $F_1(\delta \mathbf{k})$. Recall notation~\eqref{X_X0_Y_Y0}, take into account the decomposition of the resolvent
\begin{equation}
	\label{resolv_decomp}
	R_0(\zeta) = R_0(\zeta) P + R_0(\zeta)P^\perp = (\lambda_0 - \zeta)^{-1} P + R_0(\zeta)P^\perp, \qquad \zeta \in \gamma,
\end{equation} 
the holomorphy of the operator-valued function $R^\perp_0(\zeta) \coloneqq R_0(\zeta)P^\perp$ inside the contour $\gamma$, the equality $\ointctrclockwise_\gamma (\lambda_0 - \zeta)^{-2} d \zeta = 0$, and use the fact that integral over $\gamma$ of a holomorphic function inside the contour is equal to zero. Therefore,
\begin{equation}
	\label{F1_F1times}
	\begin{multlined}[c][0.9\textwidth]
		F_1(\delta \mathbf{k}) = \frac{1}{2 \pi i} \ointctrclockwise_\gamma \Bigl( \Bigl(\frac{1}{\lambda_0 - \zeta} P + R^\perp_0(\zeta) \Bigr) \omega^{-1} (\delta \mathbf{k})^* g (\mathbf{D} + \mathbf{k}^\circ) \omega^{-1} \Bigl(\frac{1}{\lambda_0 - \zeta} P + R^\perp_0(\zeta)\Bigr) \\ + \Bigl((\mathbf{D} + \mathbf{k}^\circ) \omega^{-1} \Bigl(\frac{1}{\lambda_0 - \zeta^*} P + R^\perp_0(\zeta^*)\Bigr)\Bigr)^*  g (\delta \mathbf{k}) \omega^{-1} \Bigl(\frac{1}{\lambda_0 - \zeta} P + R^\perp_0(\zeta)\Bigr) \Bigr) \, d \zeta \\
		= F^\times_1 (\delta \mathbf{k}) + F^\times_1(\delta \mathbf{k})^*,
	\end{multlined}
\end{equation}
where the operator $F^\times_1(\delta \mathbf{k})$ takes $\mathfrak{N}^\perp$ into $\mathfrak{N}$ and is defined by the expression
\begin{multline}
	\label{F1times}
	F^\times_1(\delta \mathbf{k}) \\ = \tfrac{1}{2 \pi i} \ointctrclockwise_\gamma \Bigl( \tfrac{1}{\lambda_0 - \zeta} P \omega^{-1} (\delta \mathbf{k})^* g (\mathbf{D} + \mathbf{k}^\circ) \omega^{-1}  R^\perp_0(\zeta) + \Bigl((\mathbf{D} + \mathbf{k}^\circ) \omega^{-1} \tfrac{1}{\lambda_0 - \zeta^*}  P \Bigr)^*  g (\delta \mathbf{k}) \omega^{-1} R^\perp_0(\zeta)\Bigr) \, d \zeta
	\\
	= - P \omega^{-1} (\delta \mathbf{k})^* g (\mathbf{D} + \mathbf{k}^\circ) \omega^{-1} R^\perp_0(\lambda_0) - ((\mathbf{D} + \mathbf{k}^\circ) \omega^{-1}P)^* g (\delta \mathbf{k}) \omega^{-1} R^\perp_0(\lambda_0).
\end{multline}
By~\eqref{scrR1_scrR2_est}, the remainder $\Phi(\delta \mathbf{k}, \mathbf{k})$ satisfies the estimate
\begin{equation}
	\label{Phi_est}
	\| \Phi(\delta \mathbf{k}, \mathbf{k}) \| \le C_8 |\delta \mathbf{k}|^2, \qquad |\delta \mathbf{k}| \le \varkappa; \qquad  C_8 \coloneqq (2 \pi)^{-1} l_\gamma C_5.
\end{equation}
Using integral representation~\eqref{F1_integral} and~\eqref{Y_Y0_est}, \eqref{X0_est}, and the relation $F^\times_1(\delta \mathbf{k}) = P F_1(\delta \mathbf{k}) P^\perp$, we estimate the operator $F^\times_1(\delta \mathbf{k})$ as follows:
\begin{equation}
	\label{F1times_est}
	\|F^\times_1(\delta \mathbf{k})\| \le \pi^{-1} l_\gamma C_1 C_2  |\delta \mathbf{k}|, \qquad |\delta \mathbf{k}| \le \varkappa.
\end{equation}
We also note that
\begin{equation}
	\label{F1timesP=0}
	F^\times_1(\delta \mathbf{k}) P = 0, \qquad P F^\times_1(\delta \mathbf{k})^* = 0, \qquad P^\perp F^\times_1(\delta \mathbf{k})  = 0.
\end{equation}
Moreover, we need to consider the operator $F^\times_1(\delta \mathbf{k}) F(\mathbf{k})$. Applying~\eqref{F(k)_threshold_2}, \eqref{F1_F1times}, \eqref{Phi_est},~\eqref{F1times_est}, the first and the third equalities~\eqref{F1timesP=0}, we obtain that
\begin{align}
	\label{F1times_F}
	&
	\begin{multlined}[c][0.9\textwidth]
		F^\times_1(\delta \mathbf{k}) F(\mathbf{k}) = F^\times_1(\delta \mathbf{k}) (F(\mathbf{k}) - P) F(\mathbf{k})
		\\
		 = F^\times_1(\delta \mathbf{k}) F^\times_1(\delta \mathbf{k})^* F(\mathbf{k}) + F^\times_1(\delta \mathbf{k}) \Phi(\delta \mathbf{k}, \mathbf{k}) F(\mathbf{k});
	\end{multlined}
	\\
	\label{F1times_Phi_F_est}
	&\|F^\times_1(\delta \mathbf{k}) \Phi(\delta \mathbf{k}, \mathbf{k}) F(\mathbf{k})\| \le C_9 |\delta \mathbf{k}|^3, \quad |\delta \mathbf{k}| \le \varkappa; \qquad C_9 \coloneqq 2^{-1} \pi^{-2} l_\gamma^2 C_1 C_2 C_5 .
\end{align}
The operator $F^\times_1(\delta \mathbf{k}) F^\times_1(\delta \mathbf{k})^*$ has the form
\begin{equation}
	\label{F1times_F1times*}
	\begin{multlined}[c][0.85\textwidth]
		F^\times_1(\delta \mathbf{k}) F^\times_1(\delta \mathbf{k})^* 
		\\= 
		P \omega^{-1} (\delta \mathbf{k})^* g (\mathbf{D} + \mathbf{k}^\circ) \omega^{-1} R^\perp_0(\lambda_0) \bigl((\mathbf{D} + \mathbf{k}^\circ) \omega^{-1} R^\perp_0(\lambda_0)\bigr)^* g (\delta \mathbf{k}) \omega^{-1} P 
		\\
		+ P \omega^{-1} (\delta \mathbf{k})^* g (\mathbf{D} + \mathbf{k}^\circ) \omega^{-1} R^\perp_0(\lambda_0)^2 \omega^{-1} (\delta \mathbf{k})^* g (\mathbf{D} + \mathbf{k}^\circ) \omega^{-1} P
		\\
		+ \bigl((\mathbf{D} + \mathbf{k}^\circ) \omega^{-1} P\bigr)^* g (\delta \mathbf{k}) \omega^{-1} R^\perp_0(\lambda_0) \bigl((\mathbf{D} + \mathbf{k}^\circ) \omega^{-1} R^\perp_0(\lambda_0)\bigr)^* g (\delta \mathbf{k}) \omega^{-1} P
		\\
		+ \bigl((\mathbf{D} + \mathbf{k}^\circ) \omega^{-1} P\bigr)^* g (\delta \mathbf{k}) \omega^{-1} R^\perp_0(\lambda_0)^2 \omega^{-1} (\delta \mathbf{k})^* g (\mathbf{D} + \mathbf{k}^\circ) \omega^{-1} P.
	\end{multlined}
\end{equation}

Let us now turn to the threshold approximation for the operator $\mathcal{A}(\mathbf{k}) F(\mathbf{k})$. We have
\begin{equation}
		\label{A(k)F(k)_integral}
		\mathcal{A}(\mathbf{k}) F(\mathbf{k}) = \frac{-1}{2 \pi i} \ointctrclockwise_\gamma \zeta (\mathcal{A}(\mathbf{k})- \zeta I)^{-1} d \zeta.
\end{equation}
By substituting~\eqref{resolv_iterat_2} into~\eqref{A(k)F(k)_integral}, one obtains
\begin{equation}
	\label{AF_threshold}
	\mathcal{A}(\mathbf{k}) F(\mathbf{k}) = \lambda_0 P + G_1(\delta \mathbf{k}) + G_2(\delta \mathbf{k}) + \Xi(\delta \mathbf{k}, \mathbf{k}), \qquad |\delta \mathbf{k}| \le \varkappa,
\end{equation}
where
\begin{align}
	\notag
	&G_1(\delta \mathbf{k}) = \frac{1}{2 \pi i} \ointctrclockwise_\gamma \zeta \bigl(\mathcal{Y}_0(\delta \mathbf{k}, \zeta^*)^* \mathcal{X}_0( \zeta) + \mathcal{X}_0( \zeta^*)^* \mathcal{Y}_0(\delta \mathbf{k}, \zeta) \bigr) d\zeta,
	\\
	\label{G2_integral}
	&G_2(\delta \mathbf{k})  = \frac{-1}{2 \pi i} \ointctrclockwise_\gamma \zeta T(\delta \mathbf{k}, \zeta) d\zeta,		
	\\
	\label{T}
	&\begin{multlined}[c][0.85\textwidth]
		T(\delta \mathbf{k}, \zeta) \coloneqq \mathcal{Y}_0(\delta \mathbf{k}, \zeta^*)^* \mathcal{X}_0( \zeta) \check{\mathcal{Y}}(\delta \mathbf{k})^* \mathcal{X}_0( \zeta) 
		+ \mathcal{Y}_0(\delta \mathbf{k}, \zeta^*)^* \check{\mathcal{X}}_0( \zeta) \mathcal{Y}_0(\delta \mathbf{k}, \zeta) 
		\\
		+ \mathcal{X}_0( \zeta^*)^* \mathcal{Y}_0(\delta \mathbf{k}, \zeta) \check{\mathcal{Y}}(\delta \mathbf{k})^* \mathcal{X}_0( \zeta)
		+ \mathcal{X}_0( \zeta^*)^* \check{\mathcal{Y}}(\delta \mathbf{k}) \mathcal{X}_0( \zeta^*)^* \mathcal{Y}_0(\delta \mathbf{k}, \zeta)
		\\
		- \mathcal{Y}_0(\delta \mathbf{k}, \zeta^*)^* \mathcal{Y}_0(\delta \mathbf{k}, \zeta),
	\end{multlined}
	\\
	\label{Xi_integral}
	&\Xi(\delta \mathbf{k}, \mathbf{k}) = \frac{-1}{2 \pi i} \ointctrclockwise_\gamma \zeta \mathscr{R}_2(\delta \mathbf{k}, \mathbf{k}, \zeta) d\zeta.
\end{align}
Recall the definitions of operators~\eqref{X_X0_Y_Y0}, \eqref{X0_check_Y_check}, decomposition of the resolvent~\eqref{resolv_decomp}, and consider first the representation for $G_1(\delta \mathbf{k})$:
\begin{multline*}
	G_1(\delta \mathbf{k}) = \frac{1}{2 \pi i} \ointctrclockwise_\gamma \zeta \Bigl( \Bigl(\frac{1}{\lambda_0 - \zeta} P + R^\perp_0(\zeta) \Bigr) \omega^{-1} (\delta \mathbf{k})^* g (\mathbf{D} + \mathbf{k}^\circ) \omega^{-1} \Bigl(\frac{1}{\lambda_0 - \zeta} P + R^\perp_0(\zeta)\Bigr) \\ + \Bigl((\mathbf{D} + \mathbf{k}^\circ) \omega^{-1} \Bigl(\frac{1}{\lambda_0 - \zeta^*} P + R^\perp_0(\zeta^*)\Bigr)\Bigr)^*  g (\delta \mathbf{k}) \omega^{-1} \Bigl(\frac{1}{\lambda_0 - \zeta} P + R^\perp_0(\zeta)\Bigr) \Bigr) \, d \zeta.
\end{multline*}
Similarly to~\eqref{F1times}, calculating the integral with the help of the formula for the derivative of the Cauchy integral $f'(z) = \frac{1}{2\pi i} \ointctrclockwise_\gamma f(\zeta) (\zeta - z)^{-2} d \zeta$ (where $f$ is a holomorphic function in the domain restricted by the contour~$\gamma$), we have
\begin{gather}
	\label{G1}
	G_1(\delta \mathbf{k}) = \mathfrak{G}^\circ_1 (\delta \mathbf{k}) + \lambda_0 \bigl(F^\times_1(\delta \mathbf{k}) + F^\times_1(\delta \mathbf{k})^*\bigr),
	\\
	\label{frakG1circ}
	\mathfrak{G}^\circ_1 (\delta \mathbf{k}) \coloneqq P \omega^{-1} (\delta \mathbf{k})^* g (\mathbf{D} + \mathbf{k}^\circ) \omega^{-1} P + ((\mathbf{D} + \mathbf{k}^\circ) \omega^{-1} P)^* g (\delta \mathbf{k}) \omega^{-1} P, \qquad \mathfrak{G}^\circ_1 (\delta \mathbf{k}) \colon \mathfrak{N} \to \mathfrak{N}.
\end{gather}
Recall that $F^\times_1(\delta \mathbf{k})$ was calculated in~\eqref{F1times}.
By virtue of~\eqref{AF_threshold},
\begin{equation}
	\label{G1=0_cond}
		G_1(\delta \mathbf{k}) = 0, \qquad \begin{aligned}
		&\text{if the form } \bigl((\mathcal{A}(\mathbf{k}) F(\mathbf{k})  - \lambda_0 P ) u, u\bigr), \; u \in L_2(\Omega), \; |\delta \mathbf{k}| \le \varkappa,
		\\
		&\text{is sign-definite.}
	\end{aligned}
\end{equation}
This together with~\eqref{F(k)_threshold_1}, \eqref{F1times}, the second equality~\eqref{F1timesP=0}, \eqref{F1times_F}, \eqref{F1times_Phi_F_est}, and~\eqref{G1} yields
\begin{multline*}
	P G_1(\delta \mathbf{k}) F(\mathbf{k}) = P \bigl( \mathfrak{G}^\circ_1 (\delta \mathbf{k}) + \lambda_0 F^\times_1(\delta \mathbf{k}) F^\times_1(\delta \mathbf{k})^* + \lambda_0 F^\times_1(\delta \mathbf{k}) \Phi(\delta \mathbf{k}, \mathbf{k}) \bigr) F(\mathbf{k}) \\ = P \mathfrak{G}^\circ_1 (\delta \mathbf{k}) P + O(|\delta \mathbf{k}|^2) = 0,
\end{multline*}
if the condition in~\eqref{G1=0_cond} is satisfied. Thus,
\begin{equation}
	\label{frakG1=0_cond}
	\mathfrak{G}^\circ_1 (\delta \mathbf{k}) = 0, \qquad \begin{aligned}
		&\text{if the form } \bigl((\mathcal{A}(\mathbf{k}) F(\mathbf{k})  - \lambda_0 P ) u, u\bigr), \; u \in L_2(\Omega), \; |\delta \mathbf{k}| \le \varkappa,
		\\
		&\text{is sign-definite.}
		\end{aligned}
\end{equation}

Let us now turn to expression~\eqref{G2_integral} for $G_2(\delta \mathbf{k})$. We write~\eqref{T} as
\begin{equation}
	\label{T_decomp}
	T(\delta \mathbf{k}, \zeta) = T^\circ(\delta \mathbf{k}, \zeta) + T^{\times}(\delta \mathbf{k}, \zeta) + T^{\times}(\delta \mathbf{k}, \zeta^*)^* + T^\perp(\delta \mathbf{k}, \zeta),
\end{equation}
where
\begin{align*}
	&\begin{aligned}
		T^\circ&(\delta \mathbf{k}, \zeta) = P T(\delta \mathbf{k}, \zeta) P
		\\ &= \frac{1}{\lambda_0 - \zeta} P \omega^{-1} (\delta \mathbf{k})^* g (\mathbf{D} + \mathbf{k}^\circ) \omega^{-1} R_0(\zeta) \omega^{-1} (\delta \mathbf{k})^* g (\mathbf{D} + \mathbf{k}^\circ) \omega^{-1} \frac{1}{\lambda_0 - \zeta} P
		\\
		&+ \frac{1}{\lambda_0 - \zeta} P \omega^{-1} (\delta \mathbf{k})^* g (\mathbf{D} + \mathbf{k}^\circ) \omega^{-1} \bigl(  (\mathbf{D} + \mathbf{k}^\circ) \omega^{-1} R_0(\zeta^*)\bigr)^* g (\delta \mathbf{k}) \omega^{-1} \frac{1}{\lambda_0 - \zeta} P
		\\
		&+ \Bigl( (\mathbf{D} + \mathbf{k}^\circ) \omega^{-1} \frac{1}{\lambda_0 - \zeta^*} P \Bigr)^* g (\delta \mathbf{k}) \omega^{-1} R_0(\zeta) \omega^{-1} (\delta \mathbf{k})^* g (\mathbf{D} + \mathbf{k}^\circ) \omega^{-1} \frac{1}{\lambda_0 - \zeta} P
		\\
		&+ \Bigl( (\mathbf{D} + \mathbf{k}^\circ) \omega^{-1} \frac{1}{\lambda_0 - \zeta^*} P \Bigr)^* g (\delta \mathbf{k}) \omega^{-1} \bigl((\mathbf{D} + \mathbf{k}^\circ) \omega^{-1} R_0(\zeta^*) \bigr)^* g (\delta \mathbf{k}) \omega^{-1} \frac{1}{\lambda_0 - \zeta} P
		\\
		&- \frac{1}{\lambda_0 - \zeta} P \omega^{-1} (\delta \mathbf{k})^* g (\delta \mathbf{k}) \omega^{-1} \frac{1}{\lambda_0 - \zeta} P,
	\end{aligned}
	\\
	&T^{\times} (\delta \mathbf{k}, \zeta) = P T (\delta \mathbf{k}, \zeta) P^\perp, \qquad T^{\perp} (\delta \mathbf{k}, \zeta) = P^\perp T (\delta \mathbf{k}, \zeta) P^\perp.
\end{align*}
Substituting~\eqref{T_decomp} into~\eqref{G2_integral}, we obtain
\begin{gather}
	\label{G2}
	G_2(\delta \mathbf{k}) = G^\circ_{2} (\delta \mathbf{k}) + G^{\times}_{2} (\delta \mathbf{k}) + G^{\times}_{2}(\delta \mathbf{k})^* + G^\perp_{2} (\delta \mathbf{k}),
	\\
	\label{G2^r_integral}
	G^r_{2} (\delta \mathbf{k}) = \frac{-1}{2 \pi i} \ointctrclockwise_\gamma \zeta T^r(\delta \mathbf{k}, \zeta) \, d\zeta, \qquad r \in \{\circ, \times, \perp\}.
\end{gather}
The operator $G^\circ_{2} (\delta \mathbf{k})$ acts in $\mathfrak{N}$; using the elementary equality $\frac{\zeta}{(\zeta - \lambda_0)^2} = \frac{1}{\zeta - \lambda_0} + \frac{\lambda_0}{(\zeta - \lambda_0)^2}$ and the formula for the derivative of Cauchy integral, we get
\begin{multline*}
	G^\circ_{2} (\delta \mathbf{k}) = P \omega^{-1} (\delta \mathbf{k})^* g (\delta \mathbf{k}) \omega^{-1} P
	\\
	- P \omega^{-1} (\delta \mathbf{k})^* g (\mathbf{D} + \mathbf{k}^\circ) \omega^{-1} \bigl(R_0^\perp(\lambda_0) + \lambda_0 (R_0^\perp)'(\lambda_0)\bigr)  \omega^{-1} (\delta \mathbf{k})^* g (\mathbf{D} + \mathbf{k}^\circ) \omega^{-1} P
	\\
	- P \omega^{-1} (\delta \mathbf{k})^* g (\mathbf{D} + \mathbf{k}^\circ) \omega^{-1} \bigl( (\mathbf{D} + \mathbf{k}^\circ) \omega^{-1} \bigl(R_0^\perp(\lambda_0) + \lambda_0 (R_0^\perp)'(\lambda_0)\bigr)\bigr)^* g (\delta \mathbf{k}) \omega^{-1} P
	\\
	- \bigl( (\mathbf{D} + \mathbf{k}^\circ) \omega^{-1} P \bigr)^* g (\delta \mathbf{k}) \omega^{-1} \bigl(R_0^\perp(\lambda_0) + \lambda_0 (R_0^\perp)'(\lambda_0)\bigr) \omega^{-1} (\delta \mathbf{k})^* g (\mathbf{D} + \mathbf{k}^\circ) \omega^{-1} P
	\\
	- \bigl( (\mathbf{D} + \mathbf{k}^\circ) \omega^{-1} P \bigr)^* g (\delta \mathbf{k}) \omega^{-1} \bigl( (\mathbf{D} + \mathbf{k}^\circ) \omega^{-1} \bigl(R_0^\perp(\lambda_0) + \lambda_0 (R_0^\perp)'(\lambda_0)\bigr) \bigr)^* g (\delta \mathbf{k}) \omega^{-1} P.
\end{multline*}
Here $(R_0^\perp)'(\lambda_0) \coloneqq \left. \frac{d}{d \lambda} R_0^\perp(\lambda) \right|_{\lambda = \lambda_0}$. From the first resolvent identity it directly follows that $(R_0^\perp)'(\lambda_0) = R_0^\perp(\lambda_0)^2$. Next, we have
\begin{equation}
	\label{G2times_P=0}
	G^\times_{2} (\delta \mathbf{k}) P = 0, \qquad P G^\times_{2} (\delta \mathbf{k})^* = 0, \qquad P G^\perp_{2} (\delta \mathbf{k}) = 0,
\end{equation}
and
\begin{equation}
	\label{frakG2circ}
	\begin{multlined}[c][0.8\textwidth]
		G^\circ_{2} (\delta \mathbf{k}) + \lambda_0 F^\times_1(\delta \mathbf{k}) F^\times_1(\delta \mathbf{k})^* = P \omega^{-1} (\delta \mathbf{k})^* g (\delta \mathbf{k}) \omega^{-1} P
		\\
		- P \omega^{-1} (\delta \mathbf{k})^* g (\mathbf{D} + \mathbf{k}^\circ) \omega^{-1} R_0^\perp(\lambda_0)  \omega^{-1} (\delta \mathbf{k})^* g (\mathbf{D} + \mathbf{k}^\circ) \omega^{-1} P
		\\
		- P \omega^{-1} (\delta \mathbf{k})^* g (\mathbf{D} + \mathbf{k}^\circ) \omega^{-1} \bigl( (\mathbf{D} + \mathbf{k}^\circ) \omega^{-1} R_0^\perp(\lambda_0)\bigr)^* g (\delta \mathbf{k}) \omega^{-1} P
		\\
		- \bigl( (\mathbf{D} + \mathbf{k}^\circ) \omega^{-1} P \bigr)^* g (\delta \mathbf{k}) \omega^{-1} R_0^\perp(\lambda_0) \omega^{-1} (\delta \mathbf{k})^* g (\mathbf{D} + \mathbf{k}^\circ) \omega^{-1} P
		\\
		- \bigl( (\mathbf{D} + \mathbf{k}^\circ) \omega^{-1} P \bigr)^* g (\delta \mathbf{k}) \omega^{-1} \bigl( (\mathbf{D} + \mathbf{k}^\circ) \omega^{-1} R_0^\perp(\lambda_0) \bigr)^* g (\delta \mathbf{k}) \omega^{-1} P
		 \eqqcolon \mathfrak{G}^\circ_2 (\delta \mathbf{k}),
	\end{multlined}
\end{equation}
to obtain~\eqref{frakG2circ} we have used~\eqref{F1times_F1times*}.
Moreover, we need to estimate the operator $G^\times_{2}(\delta \mathbf{k}) F(\mathbf{k})$. According to~\eqref{Y_Y0_est}, \eqref{X0_est}, \eqref{X0_check_est}, \eqref{Y_check_est}, and~\eqref{T}, the operator $T(\delta \mathbf{k}, \zeta)$ satisfies the estimate 
\begin{equation*}
	\| T(\delta \mathbf{k}, \zeta) \| \le (3 C_1 C_2^2 C_4 + C_1^2 C_3 + C_1^2) |\delta \mathbf{k}|^2, \qquad |\delta \mathbf{k}| \le \varkappa, \quad \zeta \in \gamma,
\end{equation*}
whence, by~\eqref{G2^r_integral},
\begin{gather}
	\label{G^times_2_est}
	\| G^\times_{2} (\delta \mathbf{k})\| \le C_{10} |\delta \mathbf{k}|^2, \qquad |\delta \mathbf{k}| \le \varkappa;
	\\
	\notag
	C_{10} = (2 \pi)^{-1} (\lambda_0 + d_0/2) l_\gamma (3 C_1 C_2^2 C_4 + C_1^2 C_3 + C_1^2) .
\end{gather}
Using~\eqref{F(k)_threshold_1}, the first relation~\eqref{G2times_P=0}, and~\eqref{G^times_2_est}, we arrive at
\begin{equation}
	\label{G^times_2_F_est}
	\| G^\times_{2} (\delta \mathbf{k}) F(\mathbf{k}) \| = \| G^\times_{2}(\delta \mathbf{k}) (F(\mathbf{k}) - P) \| \le C_7 C_{10} |\delta \mathbf{k}|^3, \qquad |\delta \mathbf{k}| \le \varkappa. 
\end{equation}
In the end of this section, we give an estimate for the remainer $\Xi(\delta \mathbf{k}, \mathbf{k})$. From~\eqref{scrR1_scrR2_est}, \eqref{Xi_integral} it follows that
\begin{equation}
	\label{Xi_est}
	\| \Xi(\delta \mathbf{k}, \mathbf{k})\| \le (2 \pi)^{-1} (\lambda_0 + d_0/2) l_\gamma C_6 |\delta \mathbf{k}|^3, \qquad |\delta \mathbf{k}| \le \varkappa.
\end{equation}

\subsection{Approximations for the operator exponential} 
We put
\begin{equation}
	\label{frakG^circ}
	\mathfrak{G}^\circ(\delta \mathbf{k}) \coloneqq  \lambda_0 P + \mathfrak{G}^\circ_1 (\delta \mathbf{k}) + \mathfrak{G}^\circ_2 (\delta \mathbf{k}), \qquad \mathfrak{G}^\circ(\delta \mathbf{k}) \colon \mathfrak{N} \to \mathfrak{N}.
\end{equation}
Here the operators $\mathfrak{G}^\circ_1 (\delta \mathbf{k})$ and $\mathfrak{G}^\circ_2 (\delta \mathbf{k})$ were defined in~\eqref{frakG1circ} and~\eqref{frakG2circ}.
In this section, we want to approximate the operator $e^{-i \tau \mathcal{A}(\mathbf{k})}P$, $\tau \in \mathbb{R}$, by $e^{-i \tau \mathfrak{G}^\circ(\delta \mathbf{k}) P}P$. Consider the difference
\begin{multline*}
	\bigl(e^{-i \tau \mathcal{A}(\mathbf{k})} - e^{-i \tau \mathfrak{G}^\circ(\delta \mathbf{k}) P}\bigr) P
	\\ = P \bigl(e^{-i \tau \mathcal{A}(\mathbf{k})} F(\mathbf{k}) - e^{-i \tau \mathfrak{G}^\circ(\delta \mathbf{k}) P}P\bigr) + e^{-i \tau \mathcal{A}(\mathbf{k})} (P - F(\mathbf{k})) + (F(\mathbf{k}) - P)e^{-i \tau \mathcal{A}(\mathbf{k})} F(\mathbf{k}).
\end{multline*}
By~\eqref{F(k)_threshold_1}, the last two terms admit the estimates 
\begin{equation*}
	\|e^{-i \tau \mathcal{A}(\mathbf{k})} (P - F(\mathbf{k}))\| \le C_7 |\delta \mathbf{k}|, \quad \|(F(\mathbf{k}) - P)e^{-i \tau \mathcal{A}(\mathbf{k})} F(\mathbf{k})\| \le C_7 |\delta \mathbf{k}|, \quad |\delta \mathbf{k}| \le \varkappa.
\end{equation*}
Next (cf.~\cite[the proof of Theorem~2.1]{BSu2008}),
\begin{equation*}
	P \bigl(e^{-i \tau \mathcal{A}(\mathbf{k})} F(\mathbf{k}) - e^{-i \tau \mathfrak{G}^\circ(\delta \mathbf{k})P} P\bigr) = P e^{-i \tau \mathfrak{G}^\circ(\delta \mathbf{k})P} \Sigma(\mathbf{k}, \tau),
\end{equation*}
where $\Sigma(\mathbf{k}, \tau) \coloneqq e^{i \tau \mathfrak{G}^\circ(\delta \mathbf{k})P} F (\mathbf{k}) e^{-i \tau \mathcal{A}(\mathbf{k})} - P$. We have
\begin{equation*}
	\Sigma(\mathbf{k}, \tau) = \Sigma(\mathbf{k}, 0) +  \int_0^\tau \Sigma' (\mathbf{k}, \tilde{\tau}) d \tilde{\tau}.
\end{equation*}
Obviously, $\Sigma(\mathbf{k}, 0) = F (\mathbf{k}) - P$, and, by~\eqref{F(k)_threshold_1}, $\| P e^{-i \tau \mathfrak{G}^\circ(\delta \mathbf{k})P} \Sigma(\mathbf{k}, 0)\| \le C_7 |\delta \mathbf{k}|$, $|\delta \mathbf{k}| \le \varkappa$. Next,
\begin{equation*}
	\Sigma' (\mathbf{k}, \tau) \coloneqq \frac{d \Sigma}{d \tau} (\mathbf{k}, \tau) = - i e^{i \tau \mathfrak{G}^\circ(\delta \mathbf{k})P} \bigl(\mathcal{A}(\mathbf{k}) F (\mathbf{k}) - \mathfrak{G}^\circ(\delta \mathbf{k})P\bigr) F(\mathbf{k})  e^{-i \tau \mathcal{A}(\mathbf{k})} F(\mathbf{k}).
\end{equation*}
Consider the operator $P \bigl(\mathcal{A}(\mathbf{k}) F (\mathbf{k}) - \mathfrak{G}^\circ(\delta \mathbf{k}) P \bigr) F(\mathbf{k})$. Using the second identity in~\eqref{F1timesP=0}, \eqref{F1times_F}, \eqref{AF_threshold}, \eqref{G1}, \eqref{G2}, the second and the third identities in~\eqref{G2times_P=0}, \eqref{frakG2circ}, and \eqref{frakG^circ}, we have
\begin{multline*}
	P \bigl(\mathcal{A}(\mathbf{k}) F (\mathbf{k}) - \mathfrak{G}^\circ(\delta \mathbf{k}) P\bigr) F(\mathbf{k}) \\
	= P \bigl(\mathcal{A}(\mathbf{k}) F (\mathbf{k}) - \lambda_0 P - G_1(\delta \mathbf{k})  - G_2(\delta \mathbf{k})\bigr) F(\mathbf{k})  + \lambda_0 F^\times_1(\delta \mathbf{k}) \Phi(\delta \mathbf{k}, \mathbf{k}) F(\mathbf{k}) + G^\times_2(\mathbf{k}) F(\mathbf{k})
	\\
	= P \Xi(\delta \mathbf{k}, \mathbf{k}) F(\mathbf{k})  + \lambda_0 F^\times_1(\delta \mathbf{k}) \Phi(\delta \mathbf{k}, \mathbf{k}) F(\mathbf{k}) + G^\times_2(\mathbf{k}) F(\mathbf{k}),
\end{multline*}
whence, by~\eqref{F1times_Phi_F_est}, \eqref{G^times_2_F_est}, \eqref{Xi_est},
\begin{gather*}
	\begin{alignedat}{2}
		\bigl\| P \bigl(\mathcal{A}(\mathbf{k}) F (\mathbf{k}) - \mathfrak{G}^\circ(\delta \mathbf{k}) P\bigr) F(\mathbf{k}) \bigr\| &\le C_{11}  |\delta \mathbf{k}|^3, &\qquad &|\delta \mathbf{k}| \le \varkappa;
		\\
		\biggl\| P e^{-i \tau \mathfrak{G}^\circ(\delta \mathbf{k})P} \int_0^\tau \Sigma' (\mathbf{k}, \tilde{\tau}) \, d \tilde{\tau}\biggr\| &\le  C_{11} |\tau| |\delta \mathbf{k}|^3, &\qquad &|\delta \mathbf{k}| \le \varkappa;
	\end{alignedat}
	\\
	C_{11} = \lambda_0 C_9 + C_7 C_{10} + (2 \pi)^{-1} (\lambda_0 + d_0/2) l_\gamma C_6. 
\end{gather*}

Recalling the expressions for the constants, from what has been said above we deduce the following result.
\begin{thrm}
	\label{exp_main_thrm}
	Let $\tau \in \mathbb{R}$ and $|\delta \mathbf{k}| \le \varkappa$. We have
	\begin{equation*}
		\bigl\| \bigl(e^{-i \tau \mathcal{A}(\mathbf{k})} - e^{-i \tau \mathfrak{G}^\circ(\delta \mathbf{k}) P}\bigr) P \bigr\| \le 3C_7 |\delta \mathbf{k}| + C_{11} |\tau| |\delta \mathbf{k}|^3,
	\end{equation*}
	where the constants $C_7$ and $C_{11}$ are defined by
	\begin{gather*}
		C_7 = (2 \pi)^{-1} l_\gamma (C_1 C_2 + C_1 \check{C}_2 + C_1^2 \varkappa) ,
		\\
		\begin{multlined}[c][0.8\textwidth]
			C_{11} = 2^{-1} \pi^{-2} \lambda_0 l_\gamma^2 C_1 C_2 \bigl(2 C_1 C_2 \check{C}_2 C_4 + C_1^2 C_3 + 2 C_1^2 C_2 C_4 \varkappa + C_1 C_2^2 C_4 + C_1^2\bigr) \\ + (2 \pi)^{-2} (\lambda_0 + d_0/2) l_\gamma^2 \bigl(C_1 C_2 + C_1 \check{C}_2 + C_1^2 \varkappa\bigr) \bigl(3 C_1 C_2^2 C_4 + C_1^2 C_3 + C_1^2\bigr) 
			\\ 
			+ (2 \pi)^{-1} (\lambda_0 + d_0/2) l_\gamma \bigl(3 C_1^2 C_2 C_4 
			+ 3 C_1 C_2^2 \check{C}_2 C_4^2 
			+ 3 C_1^2 C_2 C_3 C_4 
			+ 3 C_1^2 C_2^2 C_4^2 \varkappa 
			\\
			+ C_1^2 \check{C}_2 C_3 C_4 
			+ C_1^3 C_3 C_4 \varkappa
			+ C_1 C_2^3 C_4^2 
			+ C_1^2 \check{C}_2 C_4   
			+ C_1^3 C_4 \varkappa\bigr).
		\end{multlined}
	\end{gather*}
\end{thrm}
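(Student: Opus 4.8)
The plan is to reduce the estimate to the threshold approximations for $F(\mathbf{k})$ and $\mathcal{A}(\mathbf{k})F(\mathbf{k})$ already obtained in Section~\ref{ch3}; indeed the computation is essentially the one carried out in the paragraphs preceding the statement, and it remains to organize it and collect the constants. First I would split the operator $\bigl(e^{-i\tau\mathcal{A}(\mathbf{k})} - e^{-i\tau\mathfrak{G}^\circ(\delta\mathbf{k})P}\bigr)P$ into a diagonal term $P\bigl(e^{-i\tau\mathcal{A}(\mathbf{k})}F(\mathbf{k}) - e^{-i\tau\mathfrak{G}^\circ(\delta\mathbf{k})P}P\bigr)$ and two off-diagonal remainders $e^{-i\tau\mathcal{A}(\mathbf{k})}(P-F(\mathbf{k}))$ and $(F(\mathbf{k})-P)e^{-i\tau\mathcal{A}(\mathbf{k})}F(\mathbf{k})$. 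Since $e^{-i\tau\mathcal{A}(\mathbf{k})}$ is unitary, the latter two are each bounded by $\|F(\mathbf{k})-P\|\le C_7|\delta\mathbf{k}|$ by~\eqref{F(k)_threshold_1}.

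For the diagonal term I would factor out the unitary $Pe^{-i\tau\mathfrak{G}^\circ(\delta\mathbf{k})P}$, writing $P\bigl(e^{-i\tau\mathcal{A}(\mathbf{k})}F(\mathbf{k}) - e^{-i\tau\mathfrak{G}^\circ(\delta\mathbf{k})P}P\bigr) = Pe^{-i\tau\mathfrak{G}^\circ(\delta\mathbf{k})P}\Sigma(\mathbf{k},\tau)$ with $\Sigma(\mathbf{k},\tau) = e^{i\tau\mathfrak{G}^\circ(\delta\mathbf{k})P}F(\mathbf{k})e^{-i\tau\mathcal{A}(\mathbf{k})} - P$ (this is the device of \cite{BSu2008}), and then use $\Sigma(\mathbf{k},\tau) = \Sigma(\mathbf{k},0) + \int_0^\tau\Sigma'(\mathbf{k},\tilde\tau)\,d\tilde\tau$. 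The term $\Sigma(\mathbf{k},0) = F(\mathbf{k})-P$ contributes one more $C_7|\delta\mathbf{k}|$, giving $3C_7|\delta\mathbf{k}|$ in total, while $\Sigma'(\mathbf{k},\tau) = -ie^{i\tau\mathfrak{G}^\circ(\delta\mathbf{k})P}\bigl(\mathcal{A}(\mathbf{k})F(\mathbf{k}) - \mathfrak{G}^\circ(\delta\mathbf{k})P\bigr)F(\mathbf{k})e^{-i\tau\mathcal{A}(\mathbf{k})}F(\mathbf{k})$, so that, after applying $P$ and estimating the surrounding unitaries by $1$, the whole problem reduces to the cubic bound $\bigl\|P\bigl(\mathcal{A}(\mathbf{k})F(\mathbf{k}) - \mathfrak{G}^\circ(\delta\mathbf{k})P\bigr)F(\mathbf{k})\bigr\|\le C_{11}|\delta\mathbf{k}|^3$; integrating over $[0,\tau]$ then produces the term $C_{11}|\tau||\delta\mathbf{k}|^3$.

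To obtain the cubic estimate I would substitute the threshold expansion~\eqref{AF_threshold} for $\mathcal{A}(\mathbf{k})F(\mathbf{k})$, the splitting~\eqref{G1} of $G_1(\delta\mathbf{k})$ into $\mathfrak{G}^\circ_1(\delta\mathbf{k}) + \lambda_0\bigl(F^\times_1(\delta\mathbf{k}) + F^\times_1(\delta\mathbf{k})^*\bigr)$, and the splitting~\eqref{G2} of $G_2(\delta\mathbf{k})$ into $G^\circ_2(\delta\mathbf{k}) + G^\times_2(\delta\mathbf{k}) + G^\times_2(\delta\mathbf{k})^* + G^\perp_2(\delta\mathbf{k})$, and then sandwich between $P$ on the left and $F(\mathbf{k})$ on the right. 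Using the projection identities $PF^\times_1(\delta\mathbf{k})^* = 0$ from~\eqref{F1timesP=0}, $PG^\times_2(\delta\mathbf{k})^* = PG^\perp_2(\delta\mathbf{k}) = 0$ from~\eqref{G2times_P=0}, the identity $G^\circ_2(\delta\mathbf{k}) + \lambda_0 F^\times_1(\delta\mathbf{k})F^\times_1(\delta\mathbf{k})^* = \mathfrak{G}^\circ_2(\delta\mathbf{k})$ from~\eqref{frakG2circ}, the rewriting~\eqref{F1times_F} of $F^\times_1(\delta\mathbf{k})F(\mathbf{k})$, and the definition~\eqref{frakG^circ} of $\mathfrak{G}^\circ$, all contributions of orders $|\delta\mathbf{k}|^0$, $|\delta\mathbf{k}|^1$, $|\delta\mathbf{k}|^2$ cancel, leaving only $P\Xi(\delta\mathbf{k},\mathbf{k})F(\mathbf{k}) + \lambda_0 F^\times_1(\delta\mathbf{k})\Phi(\delta\mathbf{k},\mathbf{k})F(\mathbf{k}) + G^\times_2(\delta\mathbf{k})F(\mathbf{k})$. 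These three remainders are estimated by~\eqref{Xi_est}, \eqref{F1times_Phi_F_est}, \eqref{G^times_2_F_est} respectively, which gives $C_{11} = \lambda_0 C_9 + C_7 C_{10} + (2\pi)^{-1}(\lambda_0+d_0/2)l_\gamma C_6$; inserting the explicit values of $C_1,\dots,C_{10}$ yields the formula stated in the theorem.

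The main obstacle is the bookkeeping in this last cancellation: one must check that, after conjugating by $P$ and $F(\mathbf{k})$, every formally $O(|\delta\mathbf{k}|)$ and $O(|\delta\mathbf{k}|^2)$ term either vanishes through one of the projection relations or is matched exactly by a corresponding term of $\mathfrak{G}^\circ(\delta\mathbf{k})PF(\mathbf{k})$ up to an $O(|\delta\mathbf{k}|^3)$ error. The delicate point is that keeping $F(\mathbf{k})$ rather than $P$ on the right is essential: the quadratic corrector $F^\times_1(\delta\mathbf{k})F^\times_1(\delta\mathbf{k})^*$ hidden inside $G^\circ_2(\delta\mathbf{k})$ must cancel precisely against the quadratic part of $\lambda_0 F^\times_1(\delta\mathbf{k})F(\mathbf{k})$ extracted via~\eqref{F1times_F}, and the definition~\eqref{frakG2circ} of $\mathfrak{G}^\circ_2$ is arranged exactly so that this happens. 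Once this is verified, assembling and bounding the constants is routine.
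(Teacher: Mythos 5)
Your proposal reproduces the paper's own argument: the same three-term splitting with the two off-diagonal terms controlled by \eqref{F(k)_threshold_1}, the same device $\Sigma(\mathbf{k},\tau)=e^{i\tau\mathfrak{G}^\circ(\delta\mathbf{k})P}F(\mathbf{k})e^{-i\tau\mathcal{A}(\mathbf{k})}-P$ from \cite{BSu2008}, and the same reduction to the cubic bound on $P\bigl(\mathcal{A}(\mathbf{k})F(\mathbf{k})-\mathfrak{G}^\circ(\delta\mathbf{k})P\bigr)F(\mathbf{k})$ via the cancellations in \eqref{AF_threshold}, \eqref{G1}, \eqref{G2}, \eqref{F1timesP=0}, \eqref{G2times_P=0}, \eqref{frakG2circ}, \eqref{F1times_F}, yielding $C_{11}=\lambda_0 C_9+C_7C_{10}+(2\pi)^{-1}(\lambda_0+d_0/2)l_\gamma C_6$. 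This is correct and essentially identical to the proof in the text.
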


\subsection{Calculation of the operator $\mathfrak{G}^\circ(\delta \mathbf{k})$ in a basis of $\mathfrak{N}$}
Let $\{\varsigma_p\}_{p=1}^n$ be an orthonormal basis in $\mathfrak{N}$. In this section, our aim is to calculate the matrix elements of operator~\eqref{frakG^circ} in this basis. 

First of all, obviously, $(\lambda_0 P \varsigma_p, \varsigma_l) = \lambda_0 \delta_{lp}$. Let us proceed to calculation of $(\mathfrak{G}^\circ_1 (\delta \mathbf{k}) \varsigma_p, \varsigma_l)$. We have
\begin{align*}
	(\mathbf{D} + \mathbf{k}^\circ) \omega^{-1} \varsigma_p &= -i \bigl\{ \bigl(\tfrac{\partial}{\partial x_1} + i k^\circ_1 \bigr ) (\omega^{-1} \varsigma_p), \ldots, \bigl(\tfrac{\partial}{\partial x_d} + i k^\circ_d \bigr ) (\omega^{-1} \varsigma_p) \bigr\}^\mathrm{t},
	\\
	(\delta \mathbf{k}) \omega^{-1} \varsigma_p &= \bigl\{ (\delta k_1) \omega^{-1} \varsigma_p, \ldots, (\delta k_d) \omega^{-1} \varsigma_p \bigr\}^\mathrm{t}, \qquad p = 1,\ldots,n,
\end{align*}
whence
\begin{gather}
	\label{V_f1}
	\begin{multlined}[c][0.7\textwidth]
		 (P \omega^{-1} (\delta \mathbf{k})^* g (\mathbf{D} + \mathbf{k}^\circ) \omega^{-1} P \varsigma_p, \varsigma_l) \\ = (g (\mathbf{D} + \mathbf{k}^\circ) \omega^{-1} P \varsigma_p, (\delta \mathbf{k}) \omega^{-1} P \varsigma_l)  = i \sum_{r=1}^{d} (\delta k_r) \tilde{\mathfrak{g}}^{1,lp}_r, \quad p,l = 1, \ldots,n,
	\end{multlined}
	\\
	\notag
	\tilde{\mathfrak{g}}^{1,lp}_r \coloneqq - \sum_{s=1}^{d} \int_{\Omega} g_{rs}(\mathbf{x}) \omega(\mathbf{x})^{-1} \varsigma_l (\mathbf{x})^*  \left(\tfrac{\partial}{\partial x_s} + ik^\circ_s\right) \bigl(\omega(\mathbf{x})^{-1} \varsigma_p(\mathbf{x})\bigr) \, d\mathbf{x}.
\end{gather}
Next, since the operator $\bigl((\mathbf{D} + \mathbf{k}^\circ) \omega^{-1} P \bigr)^* g (\delta \mathbf{k}) \omega^{-1} P$  is adjoint to $P \omega^{-1} (\delta \mathbf{k})^* g (\mathbf{D} + \mathbf{k}^\circ) \omega^{-1} P$, it is seen that
\begin{equation}
	\label{V_f2}
	\bigl(\bigl((\mathbf{D} + \mathbf{k}^\circ) \omega^{-1} P \bigr)^* g (\delta \mathbf{k}) \omega^{-1} P \varsigma_p, \varsigma_l \bigr) = -i \sum_{r=1}^{d} (\delta k_r) (\tilde{\mathfrak{g}}^{1,pl}_r)^*,  \qquad p,l = 1, \ldots,n.
\end{equation}
Thus, from~\eqref{frakG1circ}, \eqref{V_f1}, \eqref{V_f2} it follows that
\begin{equation}
	\label{frak_g_1}
	(\mathfrak{G}^\circ_1 (\delta \mathbf{k})\varsigma_p, \varsigma_l) = \left\langle \mathfrak{g}^{1,lp}, \delta \mathbf{k} \right\rangle, \qquad p,l = 1, \ldots,n,
\end{equation} 
where $\mathfrak{g}^{1,lp} = (\mathfrak{g}^{1,lp}_{1}, \ldots, \mathfrak{g}^{1,lp}_{d})^\mathrm{t}$ is the column-vector with the entries
\begin{multline*}
	 \mathfrak{g}^{1,lp}_{r} = i \bigl(\tilde{\mathfrak{g}}^{1,lp}_r -  (\tilde{\mathfrak{g}}^{1,pl}_r)^* \bigr)
	 \\ 
	 = i \sum_{s=1}^{d} \int_{\Omega} g_{rs}(\mathbf{x}) \left( \omega(\mathbf{x})^{-1} \varsigma_p (\mathbf{x})  \frac{\partial}{\partial x_s}\bigl(\omega(\mathbf{x})^{-1} \varsigma_l(\mathbf{x})^*\bigr)  - \omega(\mathbf{x})^{-1} \varsigma_l (\mathbf{x})^*  \frac{\partial}{\partial x_s} \bigl(\omega(\mathbf{x})^{-1} \varsigma_p(\mathbf{x})\bigr) \right) \, d\mathbf{x} \\ 
	 + 2  \sum_{s=1}^{d} k^\circ_s \int_{\Omega} g_{rs}(\mathbf{x}) \omega(\mathbf{x})^{-2}   \varsigma_p(\mathbf{x}) \varsigma_l (\mathbf{x})^*  d\mathbf{x}, \qquad  r =1, \ldots,d.
\end{multline*}
Let us proceed to calculation of the matrix elements of the operator $\mathfrak{G}^\circ_2 (\delta \mathbf{k})$, defined by~\eqref{frakG2circ}. It is convenient to write this operator as
\begin{multline*}
	\mathfrak{G}^\circ_2 (\delta \mathbf{k}) = - \bigl( P \omega^{-1} (\delta \mathbf{k})^* g (\mathbf{D} + \mathbf{k}^\circ) \omega^{-1} + \bigl( (\mathbf{D} + \mathbf{k}^\circ) \omega^{-1} P \bigr)^* g (\delta \mathbf{k}) \omega^{-1} \bigr)
	\\ 
	\times \left( R_0^\perp(\lambda_0)  \omega^{-1} (\delta \mathbf{k})^* g (\mathbf{D} + \mathbf{k}^\circ) \omega^{-1} P + \bigl( (\mathbf{D} + \mathbf{k}^\circ) \omega^{-1} R_0^\perp(\lambda_0)\bigr)^* g (\delta \mathbf{k}) \omega^{-1} P \right) 
	\\ 
	+ P \omega^{-1} (\delta \mathbf{k})^* g (\delta \mathbf{k}) \omega^{-1} P. 
\end{multline*}
Denote by $\Lambda^p (\delta \mathbf{k})$ the result of the action of the operator $$\left( R_0^\perp(\lambda_0)  \omega^{-1} (\delta \mathbf{k})^* g (\mathbf{D} + \mathbf{k}^\circ) \omega^{-1} P + \bigl( (\mathbf{D} + \mathbf{k}^\circ) \omega^{-1} R_0^\perp(\lambda_0)\bigr)^* g (\delta \mathbf{k}) \omega^{-1} P \right)$$
on the basis element $\varsigma_p$. Obviously, $\Lambda^p (\delta \mathbf{k}) \in \widetilde{\mathcal{H}}^1(\Omega)$ is the   (weak) solution of the equation 
\begin{equation*}
	\bigl( \mathcal{A}(\mathbf{k}^\circ) - \lambda_0 I \bigr) \Lambda^p (\delta \mathbf{k}) = \omega^{-1} (\delta \mathbf{k})^* g (\mathbf{D} + \mathbf{k}^\circ) \omega^{-1} \varsigma_p +  \omega^{-1} (\mathbf{D} + \mathbf{k}^\circ)^* g (\delta \mathbf{k}) \omega^{-1} \varsigma_p, 
	\qquad \Lambda^p (\delta \mathbf{k}) \perp \mathfrak{N}.
\end{equation*}
The right-hand side of this equation is linear in $\delta \mathbf{k}$ and has the following form:
\begin{multline*}
	-i \sum_{r,s=1}^{d} \omega(\mathbf{x})^{-1} (\delta k_r) g_{rs}(\mathbf{x})  \left(\frac{\partial}{\partial x_s} + ik^\circ_s\right) \bigl( \omega(\mathbf{x})^{-1} \varsigma_p(\mathbf{x}) \bigr) \\ - i \sum_{r,s=1}^{d} \omega(\mathbf{x})^{-1} \left(\frac{\partial}{\partial x_s} + ik^\circ_s\right) \bigl( g_{sr}(\mathbf{x})  (\delta k_r) \omega(\mathbf{x})^{-1} \varsigma_p(\mathbf{x})\bigr).
\end{multline*}
Therefore, $\Lambda^p (\delta \mathbf{k})$ can be written as $\Lambda^p (\delta \mathbf{k}) = -i \sum_{r=1}^{d} (\delta k_r) \Lambda^p_r$, where $\Lambda^p_r \in \widetilde{\mathcal{H}}^1(\Omega)$ is the solution of the equation
\begin{multline*}
	\bigl( \mathcal{A}(\mathbf{k}^\circ) - \lambda_0 I \bigr) \Lambda^p_{r}(\mathbf{x})  = \sum_{s=1}^{d} \omega(\mathbf{x})^{-1} g_{rs}(\mathbf{x})  \left(\frac{\partial}{\partial x_s} + ik^\circ_s\right) \bigl( \omega(\mathbf{x})^{-1} \varsigma_p(\mathbf{x}) \bigr) \\ + \sum_{s=1}^{d} \omega(\mathbf{x})^{-1} \left(\frac{\partial}{\partial x_s} + ik^\circ_s\right) \bigl( g_{sr}(\mathbf{x}) \omega(\mathbf{x})^{-1} \varsigma_p(\mathbf{x}) \bigr), 
	\quad \Lambda^p_r \perp \mathfrak{N}.
\end{multline*}
Next, let us calculate the inner product
\begin{equation*}
	- \left( \bigl( P \omega^{-1} (\delta \mathbf{k})^* g (\mathbf{D} + \mathbf{k}^\circ) \omega^{-1} + \bigl( (\mathbf{D} + \mathbf{k}^\circ) \omega^{-1} P \bigr)^* g (\delta \mathbf{k}) \omega^{-1} \bigr) \Lambda^p(\delta \mathbf{k}), \varsigma_l \right), \qquad p,l = 1, \ldots,n.
\end{equation*}
Similarly to~\eqref{V_f1}, \eqref{V_f2}, one obtains
\begin{multline*}
	- \left( \bigl( P \omega^{-1} (\delta \mathbf{k})^* g (\mathbf{D} + \mathbf{k}^\circ) \omega^{-1} + \bigl( (\mathbf{D} + \mathbf{k}^\circ) \omega^{-1} P \bigr)^* g (\delta \mathbf{k}) \omega^{-1} \bigr) \Lambda^p(\delta \mathbf{k}) , \varsigma_l \right) = - \sum_{q,r,s=1}^{d} (\delta k_r) (\delta k_q) \\ \times \int_{\Omega} g_{qs}(\mathbf{x}) \left( \omega(\mathbf{x})^{-1} \Lambda^p_r (\mathbf{x})  \frac{\partial}{\partial x_s} \bigl(\omega(\mathbf{x})^{-1} \varsigma_l(\mathbf{x})^* \bigr)  - \omega(\mathbf{x})^{-1} \varsigma_l (\mathbf{x})^*  \frac{\partial}{\partial x_s} \bigl(\omega(\mathbf{x})^{-1}  \Lambda^p_r(\mathbf{x}) \bigr) \right) \, d\mathbf{x} \\ 
	+ 2 i \sum_{q,r,s=1}^{d} (\delta k_r) (\delta k_q) k^\circ_s \int_{\Omega} g_{qs}(\mathbf{x}) \omega(\mathbf{x})^{-2}   \Lambda^p_r(\mathbf{x}) \varsigma_l (\mathbf{x})^*  d\mathbf{x}, \qquad p,l=1,\ldots,n.
\end{multline*}
Finally,
\begin{multline*}
	(P \omega^{-1} (\delta \mathbf{k})^* g (\delta \mathbf{k}) \omega^{-1} P \varsigma_p, \varsigma_l) = (g (\delta \mathbf{k}) \omega^{-1} P \varsigma_p, (\delta \mathbf{k}) \omega^{-1} P  \varsigma_l) \\ 
	= \sum_{r,q=1}^{d} (\delta k_r) (\delta k_q) \int_{\Omega} g_{qr}(\mathbf{x}) \omega(\mathbf{x})^{-2} \varsigma_p(\mathbf{x}) \varsigma_l(\mathbf{x})^* d\mathbf{x},\qquad p,l=1,\ldots,n.
\end{multline*}
Thereby, we have obtained the formula for the matrix elements of the operator $\mathfrak{G}^\circ_2 (\delta \mathbf{k})$:
\begin{equation*}
	(\mathfrak{G}^\circ_2 (\delta \mathbf{k}) \varsigma_p, \varsigma_l) = \left\langle \mathfrak{g}^{2,lp} (\delta \mathbf{k}), (\delta \mathbf{k}) \right\rangle, \qquad p,l=1,\ldots,n,
\end{equation*}
where $\mathfrak{g}^{2,lp}$ is $(d \times d)$-matrix with the entries
\begin{multline*}
	\mathfrak{g}^{2,lp}_{rq} = - \sum_{s=1}^{d} \int_{\Omega} g_{qs}(\mathbf{x}) \left( \omega(\mathbf{x})^{-1} \Lambda^p_r (\mathbf{x})  \frac{\partial}{\partial x_s}(\omega(\mathbf{x})^{-1} \varsigma_l(\mathbf{x})^*)  - \omega(\mathbf{x})^{-1} \varsigma_l (\mathbf{x})^*  \frac{\partial}{\partial x_s} (\omega(\mathbf{x})^{-1}  \Lambda^p_r(\mathbf{x})) \right)  d\mathbf{x} \\ 
	+ 2 i \sum_{s=1}^{d} k^\circ_s \int_{\Omega} g_{qs}(\mathbf{x}) \omega(\mathbf{x})^{-2}   \Lambda^p_r(\mathbf{x}) \varsigma_l (\mathbf{x})^*  d\mathbf{x}
	+
	\int_{\Omega} g_{qr}(\mathbf{x}) \omega(\mathbf{x})^{-2} \varsigma_p(\mathbf{x}) \varsigma_l(\mathbf{x})^* d\mathbf{x}, \qquad r,q = 1, \ldots, d.
\end{multline*}
As a result, operator~\eqref{frakG^circ} is represented in the basis $\{\varsigma_p\}_{p=1}^n$ by the matrix
\begin{equation*}
	\mathfrak{g}(\delta \mathbf{k}) \coloneqq \bigl\{\lambda_0 \delta_{lp} +  \left\langle \mathfrak{g}^{1,lp}, \delta \mathbf{k} \right\rangle + \left\langle \mathfrak{g}^{2,lp} (\delta \mathbf{k}), (\delta \mathbf{k}) \right\rangle \bigr\}_{l,p=1}^n.
\end{equation*}

In the end of this section, consider the action of the exponential $e^{-i \tau \mathfrak{G}^\circ(\delta \mathbf{k})}$ on the element~$\varsigma_j$. It is easy to check that
\begin{equation}
	\label{exp_basis}
	e^{-i \tau \mathfrak{G}^\circ(\delta \mathbf{k})} \varsigma_j = \sum_{l=1}^{n} c_{jl}(\tau) \varsigma_l,
\end{equation} 
where $ \{c_{j1}(\tau), \ldots, c_{jn}(\tau)\}^\mathrm{t} \eqqcolon \mathbf{c}_j(\tau) = e^{-i \tau \mathfrak{g}(\delta \mathbf{k})} \mathbf{e}_j$ is the solution of the system
\begin{equation*}
	\left\{
		i \frac{\partial}{\partial \tau} \mathbf{c}_{j}(\tau)  = (\mathfrak{g}(\delta \mathbf{k}) \mathbf{c}_j)(\tau), \qquad
		\mathbf{c}_{j}(0) = \mathbf{e}_j.
	\right.
\end{equation*}

\section{Main results of the paper}
\label{ch4}
In this section, we formulate the main results of the paper. Let $\varepsilon > 0$ be a small parameter. If $F(\mathbf{x})$ is a $\Gamma$-periodic function, then we put  $F^\varepsilon (\mathbf{x}) \coloneqq F(\varepsilon^{-1} \mathbf{x})$. In $L_2(\mathbb{R}^d)$, we consider the operator formally defined by the differential expression
\begin{equation}
	\label{A_eps}
	\mathcal{A}_\varepsilon = - \omega^\varepsilon(\mathbf{x})^{-1} \operatorname{div} g^\varepsilon(\mathbf{x}) \nabla \omega^\varepsilon(\mathbf{x})^{-1}, \quad g(\mathbf{x}) = \check{g}(\mathbf{x}) \omega(\mathbf{x})^2.
\end{equation}
Here $\check{g}$ and $\omega$ are $\Gamma$-periodic functions satisfying conditions~\eqref{g_check_cond}, \eqref{omega_cond1}, and $\omega(\mathbf{x}) > 0$; \mbox{$\omega, \omega^{-1} \in L_\infty$}. The precise definition of the operator $\mathcal{A}_\varepsilon$ is given in terms of the corresponding quadratic form (cf.~\eqref{a_form_2}). Operators~\eqref{A} and~\eqref{A_eps} satisfy the following relation:
\begin{equation*}
	\mathcal{A}_\varepsilon = \varepsilon^{-2} T_\varepsilon^* \mathcal{A} T_\varepsilon,
\end{equation*}
where $T_\varepsilon$ is the operator of scaling transformation: $(T_\varepsilon u)(\mathbf{x}) = \varepsilon^{d/2} u(\varepsilon \mathbf{x})$.

Let $\{\varsigma_j\}_{j=1}^n$ be an (arbitrary) orthonormal basis in $\mathfrak{N}$ and let $f_j \in L_2(\mathbb{R}^d)$, $j = 1, \ldots,n$. We suppose that the functions $\varsigma_j (\mathbf{x})$, $j=1,\ldots,n$, are $\Gamma$-periodically extended to $\mathbb{R}^d$. We study the behavior of the solutions $u_{j,\varepsilon} (\mathbf{x}, \tau)$, $j = 1, \ldots,n$, $\varepsilon \to 0$, of the following Cauchy problem for the nonstationary Schr\"{o}dinger equation
\begin{equation}
	\label{Cauchy_problem}
	\left\{
	\begin{aligned}
		&i \frac{\partial}{\partial \tau} u_{j,\varepsilon} (\mathbf{x},\tau) = (\mathcal{A}_\varepsilon u_{j,\varepsilon})(\mathbf{x},\tau),\\
		&u_{j,\varepsilon} (\mathbf{x},0) = e^{i \varepsilon^{-1} \left\langle \mathbf{k}^\circ, \mathbf{x} \right\rangle } \varsigma_j^\varepsilon(\mathbf{x}) f_j(\mathbf{x}).
	\end{aligned}
	\right.
\end{equation} 

In $L_2(\mathbb{R}^d;\mathbb{C}^n)$, we consider the operator
\begin{align}
	\label{A^eff_def}
	&\mathcal{A}^{\mathrm{eff}}_\varepsilon = \begin{pmatrix}
		\mathcal{A}^{\mathrm{eff},11}_\varepsilon & \cdots & \mathcal{A}^{\mathrm{eff},1n}_\varepsilon \\
		\vdots & \ddots & \vdots \\
		\mathcal{A}^{\mathrm{eff},n1}_\varepsilon & \cdots & \mathcal{A}^{\mathrm{eff},nn}_\varepsilon 
	\end{pmatrix}, \qquad \Dom \mathcal{A}^{\mathrm{eff}}_\varepsilon = H^2(\mathbb{R}^d; \mathbb{C}^n),
	\\
	\label{A^eff_lp_def}
	&\mathcal{A}^{\mathrm{eff}, lp}_\varepsilon \coloneqq \varepsilon^{-2} \lambda_0 I - i \varepsilon^{-1} \left\langle \mathfrak{g}^{1,lp}, \nabla \right\rangle - \operatorname{div} \mathfrak{g}^{2,lp} \nabla,
\end{align}
which is called \emph{the effective operator}. Let $\mathbf{v}^{\mathrm{eff}}_{j, \varepsilon} (\mathbf{x},\tau)$ be the solution of the corresponding "homogenized" problem
\begin{equation}
	\label{Cauchy_problem_eff}
	\left\{
	\begin{aligned}
		&i \frac{\partial}{\partial \tau} \mathbf{v}^{\mathrm{eff}}_{j, \varepsilon} (\mathbf{x},\tau) = \mathcal{A}^{\mathrm{eff}}_\varepsilon \mathbf{v}^{\mathrm{eff}}_{j, \varepsilon} (\mathbf{x},\tau) ,\\
		&\mathbf{v}^{\mathrm{eff}}_{j, \varepsilon} (\mathbf{x},0) = J_j f_j (\mathbf{x}).
	\end{aligned}
	\right.
\end{equation}
Here the operator $J_j \colon \mathbb{C} \to \mathbb{C}^n$ is defined by the rule $a \mapsto a \mathbf{e}_j$. 
Put
\begin{equation}
	\label{v_eff}
	u^{\mathrm{eff}}_{j,\varepsilon} (\mathbf{x},\tau) \coloneqq e^{i \varepsilon^{-1} \left\langle \mathbf{k}^\circ, \mathbf{x} \right\rangle } \sum_{l=1}^{n} \varsigma_l^\varepsilon (\mathbf{x}) v^{\mathrm{eff}}_{jl, \varepsilon} (\mathbf{x},\tau).	
\end{equation}
The solutions of problems~\eqref{Cauchy_problem}, \eqref{Cauchy_problem_eff} and $u^{\mathrm{eff}}_{j,\varepsilon}$ can be represented as follows:
\begin{equation}
	\label{operator_represent}
	\begin{gathered}
		u_{j,\varepsilon} (\cdot, \tau) = e^{-i \tau \mathcal{A}_\varepsilon} e^{i \varepsilon^{-1} \left\langle \mathbf{k}^\circ, \cdot \right\rangle } \varsigma_j^\varepsilon f_j, \qquad \mathbf{v}^{\mathrm{eff}}_{j, \varepsilon} (\cdot,\tau) = e^{-i \tau \mathcal{A}^{\mathrm{eff}}_\varepsilon} J_j f_j, 
		\\
		u^{\mathrm{eff}}_{j,\varepsilon} (\cdot,\tau) =  e^{i \varepsilon^{-1} \left\langle \mathbf{k}^\circ, \cdot \right\rangle } \sum_{l=1}^{n} \varsigma_l^\varepsilon \tilde{J}_l e^{-i \tau \mathcal{A}^{\mathrm{eff}}_\varepsilon} J_j f_j,
	\end{gathered}
\end{equation}
where $\tilde{J}_l \colon \mathbb{C}^n \to \mathbb{C}$ is defined by the formula $\tilde{J}_l \mathbf{c} = \left\langle \mathbf{c}, \mathbf{e}_l\right\rangle$.

\begin{thrm}
	\label{main_thrm}
	Let $u_\varepsilon$ be the solution of problem~\eqref{Cauchy_problem}, and let $u^{\mathrm{eff}}_{j,\varepsilon}$ be defined by~\eqref{v_eff}. Let $\varepsilon > 0$, $\tau \in \mathbb{R}$, $f_j \in H^3(\mathbb{R}^d)$. We have
	\begin{equation}
		\label{thrm_est}
		\| u_{j,\varepsilon} (\cdot, \tau) - u^{\mathrm{eff}}_{j,\varepsilon} (\cdot, \tau) \|_{L_2(\mathbb{R}^d)} \le \mathcal{C} (1+|\tau|) \varepsilon \|f_j\|_{H^3(\mathbb{R}^d)}, \qquad j = 1, \ldots,n,
	\end{equation}
	with the constant $\mathcal{C}$, which depends only on $\lambda_0$, $\varkappa$, $d_0$, $\|g\|_{L_\infty}$, $\| \omega^{-1} \|_{L_\infty}$, and $\| \varsigma_l\|_{L_\infty}$, $l=1,\ldots, n$.
\end{thrm}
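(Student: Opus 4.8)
The plan is to combine the threshold approximation of Theorem~\ref{exp_main_thrm} with the scaling transformation and the Gelfand decomposition, in the spirit of~\cite{BSu2008}, keeping careful track of the quasimomentum shift by $\mathbf{k}^\circ$ and of the $\varepsilon^{-2}$ time rescaling.

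First I pass to the slow scale. Since $\mathcal{A}_\varepsilon = \varepsilon^{-2} T_\varepsilon^* \mathcal{A} T_\varepsilon$, one has $e^{-i\tau\mathcal{A}_\varepsilon} = T_\varepsilon^* e^{-i\tau\varepsilon^{-2}\mathcal{A}} T_\varepsilon$, and a direct computation gives $T_\varepsilon\bigl(e^{i\varepsilon^{-1}\langle\mathbf{k}^\circ,\cdot\rangle}\varsigma_j^\varepsilon f_j\bigr) = e^{i\langle\mathbf{k}^\circ,\cdot\rangle}\varsigma_j\,(T_\varepsilon f_j)$. Applying the Gelfand transformation $\mathscr{G}$, the factor $e^{i\langle\mathbf{k}^\circ,\cdot\rangle}$ becomes a shift of the quasimomentum, $\mathscr{G}\bigl(e^{i\langle\mathbf{k}^\circ,\cdot\rangle}h\bigr)(\mathbf{x},\mathbf{k}) = (\mathscr{G}h)(\mathbf{x},\mathbf{k}-\mathbf{k}^\circ)$, and multiplication by the periodic function $\varsigma_j$ acts fiberwise. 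I split $T_\varepsilon f_j = f' + f''$, where $f'$ is the part of $T_\varepsilon f_j$ whose Fourier transform is supported in $\mathrm{B}_\varkappa(0)$; since $(\Phi(T_\varepsilon f_j))(\mathbf{k}) = \varepsilon^{-d/2}(\Phi f_j)(\mathbf{k}/\varepsilon)$ one gets $\|f''\|_{L_2} \le \varepsilon\varkappa^{-1}\|f_j\|_{H^1}$. By~\eqref{Gelfand_Fourier_inv} (used in the reverse direction), the Gelfand transform of $e^{i\langle\mathbf{k}^\circ,\cdot\rangle}\varsigma_j f'$ equals, for $\mathbf{k}\in\mathrm{B}_\varkappa(\mathbf{k}^\circ)$, the function $(\mathbf{x},\mathbf{k})\mapsto |\Omega|^{-1/2}(\Phi f')(\mathbf{k}-\mathbf{k}^\circ)\,\varsigma_j(\mathbf{x})$; that is, on each fiber it is a scalar multiple of $\varsigma_j = P\varsigma_j \in \mathfrak{N}$, \emph{with no error term}. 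The $f''$-part contributes a term of size $O(\varepsilon\|f_j\|_{H^1})$ to $u_{j,\varepsilon}$, which I discard using unitarity of $e^{-i\tau\mathcal{A}_\varepsilon}$ and $\varsigma_j\in L_\infty$.

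Next I apply $e^{-i\tau\varepsilon^{-2}\mathcal{A}(\mathbf{k})}$ fiberwise and invoke Theorem~\ref{exp_main_thrm} with $\tau$ replaced by $\tau\varepsilon^{-2}$: since $\varsigma_j\in\mathfrak{N}$,
\begin{equation*}
	\bigl\| e^{-i\tau\varepsilon^{-2}\mathcal{A}(\mathbf{k})}\varsigma_j - e^{-i\tau\varepsilon^{-2}\mathfrak{G}^\circ(\delta\mathbf{k})}\varsigma_j \bigr\|_{L_2(\Omega)} \le 3C_7|\delta\mathbf{k}| + C_{11}\varepsilon^{-2}|\tau|\,|\delta\mathbf{k}|^3, \qquad |\delta\mathbf{k}|\le\varkappa,
\end{equation*}
and by~\eqref{exp_basis} the main term equals $\sum_{l=1}^n c_{jl}(\tau\varepsilon^{-2})\varsigma_l$, where $\mathbf{c}_j(\tau\varepsilon^{-2}) = e^{-i\tau\varepsilon^{-2}\mathfrak{g}(\delta\mathbf{k})}\mathbf{e}_j$. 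Returning through $\mathscr{G}^{-1}$ (again via~\eqref{Gelfand_Fourier_inv}) and $T_\varepsilon^*$, and using the elementary fact that $\mathcal{A}^{\mathrm{eff}}_\varepsilon$ is the Fourier multiplier with matrix symbol $\mathbf{p}\mapsto\varepsilon^{-2}\mathfrak{g}(\varepsilon\mathbf{p})$ (immediate from~\eqref{A^eff_lp_def} and the definition of $\mathfrak{g}(\delta\mathbf{k})$), one checks that the reconstructed main term is exactly $e^{i\varepsilon^{-1}\langle\mathbf{k}^\circ,\cdot\rangle}\sum_l \varsigma_l^\varepsilon\, \widetilde{v}^{\,\mathrm{eff}}_{jl,\varepsilon}(\cdot,\tau)$, where $\widetilde{v}^{\,\mathrm{eff}}_{jl,\varepsilon}$ is $v^{\mathrm{eff}}_{jl,\varepsilon}$ with Fourier transform restricted to $\{|\mathbf{p}|<\varkappa/\varepsilon\}$; the discarded high-frequency part of $v^{\mathrm{eff}}_{jl,\varepsilon}$ is again $O(\varepsilon\|f_j\|_{H^1})$ by unitarity of $e^{-i\tau\mathcal{A}^{\mathrm{eff}}_\varepsilon}$, so it can be reinstated at the cost of $O(\varepsilon)$.

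Finally I collect the errors. As $\mathscr{G}$ and $T_\varepsilon^*$ are isometric, the $L_2(\mathbb{R}^d)$-norm of the remainder produced by the fiber estimate equals the $\mathcal{K}$-norm of $|\Omega|^{-1/2}(\Phi f')(\mathbf{k}-\mathbf{k}^\circ)$ times the fiber error above; squaring and integrating over $\mathbf{k}$, and recalling that $f'$ is a frequency truncation of $T_\varepsilon f_j$ with $\|T_\varepsilon f_j\|_{\dot{H}^s} = \varepsilon^s\|f_j\|_{\dot{H}^s}$, the term $3C_7|\delta\mathbf{k}|$ contributes $O(\varepsilon^2\|f_j\|^2_{H^1})$ and the term $C_{11}\varepsilon^{-2}|\tau|\,|\delta\mathbf{k}|^3$ contributes $O(\varepsilon^2|\tau|^2\|f_j\|^2_{H^3})$ — this is precisely where the Sobolev class $H^3$ and the factor $(1+|\tau|)$ enter. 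Adding the $O(\varepsilon)$ errors from the two cutoffs, and absorbing the constant $\sum_l\|\varsigma_l\|_{L_\infty}$ that arises when the sum $\sum_l\varsigma_l^\varepsilon(\cdot)(\,\cdot\,)$ is estimated in $L_2$, yields~\eqref{thrm_est} with $\mathcal{C}$ of the asserted dependence. The step requiring most care is the commutation of the Gelfand transformation with the frequency cutoffs and with the fibers $\mathcal{A}(\mathbf{k})$: in particular, verifying that for band-limited initial data the Gelfand fiber over $\mathrm{B}_\varkappa(\mathbf{k}^\circ)$ is a genuine scalar multiple of $\varsigma_j$ with no correction, so that Theorem~\ref{exp_main_thrm} applies on $\mathfrak{N}$ directly, and the accompanying bookkeeping of the $\varepsilon^{-2}$-factor in the time variable.
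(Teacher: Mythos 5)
Your proposal is correct and follows essentially the same route as the paper: scaling transformation, Gelfand decomposition with the quasimomentum shift by $\mathbf{k}^\circ$, a frequency cutoff at $\varkappa$ handled by an $O(\varepsilon)$ bound, and application of Theorem~\ref{exp_main_thrm} on the fibers with $\tau$ replaced by $\tau\varepsilon^{-2}$, identifying $\mathcal{A}^{\mathrm{eff}}_\varepsilon$ with the multiplier $\varepsilon^{-2}\mathfrak{g}(\varepsilon\mathbf{p})$. The only (cosmetic) difference is that you integrate the fiber error against the weight $|(\Phi f_j)|^2$ directly, whereas the paper keeps the argument in operator-norm form via the smoothing factor $\varepsilon^3(-\Delta+\varepsilon^2 I)^{-3/2}$ and the pointwise bound $\bigl(3C_7|\delta\mathbf{k}|+C_{11}\varepsilon^{-2}|\tau||\delta\mathbf{k}|^3\bigr)\varepsilon^3(|\delta\mathbf{k}|^2+\varepsilon^2)^{-3/2}\le(3C_7+C_{11}|\tau|)\varepsilon$.
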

\begin{proof}
	By~\eqref{operator_represent}, estimate~\eqref{thrm_est} can be reformulated in the operator terms:
	\begin{equation}
		\label{thrm_est_operators}
		\Bigl\| e^{-i \tau \mathcal{A}_\varepsilon} e^{i \varepsilon^{-1} \left\langle \mathbf{k}^\circ, \cdot \right\rangle } \varsigma_j^\varepsilon  - e^{i \varepsilon^{-1} \left\langle \mathbf{k}^\circ, \cdot \right\rangle } \sum_{l=1}^{n} \varsigma_l^\varepsilon \tilde{J}_l e^{-i \tau \mathcal{A}^{\mathrm{eff}}_\varepsilon} J_j  \Bigr\|_{H^3 (\mathbb{R}^d) \to L_2(\mathbb{R}^d)} \le \mathcal{C} (1+|\tau|) \varepsilon,
	\end{equation}
	where $\tau \in \mathbb{R}$, $\varepsilon > 0$. Thus, our aim is to prove~\eqref{thrm_est_operators}. Since the operator $(-\Delta + I)^{3/2}$ is an isometric isomorphism of the Sobolev space $H^3(\mathbb{R}^d)$ onto $L_2(\mathbb{R}^d)$, we have 
	\begin{multline*}
		\Bigl\| e^{-i \tau \mathcal{A}_\varepsilon} e^{i \varepsilon^{-1} \left\langle \mathbf{k}^\circ, \cdot \right\rangle } \varsigma_j^\varepsilon  - e^{i \varepsilon^{-1} \left\langle \mathbf{k}^\circ, \cdot \right\rangle } \sum_{l=1}^{n} \varsigma_l^\varepsilon \tilde{J}_l e^{-i \tau \mathcal{A}^{\mathrm{eff}}_\varepsilon} J_j  \Bigr\|_{H^3 (\mathbb{R}^d) \to L_2(\mathbb{R}^d)}
		\\
		=  \Bigl\| \Bigl(e^{-i \tau \mathcal{A}_\varepsilon} e^{i \varepsilon^{-1} \left\langle \mathbf{k}^\circ, \cdot \right\rangle } \varsigma_j^\varepsilon  - e^{i \varepsilon^{-1} \left\langle \mathbf{k}^\circ, \cdot \right\rangle } \sum_{l=1}^{n} \varsigma_l^\varepsilon \tilde{J}_l e^{-i \tau \mathcal{A}^{\mathrm{eff}}_\varepsilon} J_j \Bigr) (-\Delta + I)^{-3/2} \Bigr\|_{L_2 (\mathbb{R}^d) \to L_2(\mathbb{R}^d)}.
	\end{multline*}
	From the unitarity of the scaling transformation it directly follows that
	\begin{equation*}
		\begin{multlined}[c][0.9\textwidth]
			\Bigl\| \Bigl(e^{-i \tau \mathcal{A}_\varepsilon} e^{i \varepsilon^{-1} \left\langle \mathbf{k}^\circ, \cdot \right\rangle } \varsigma_j^\varepsilon  - e^{i \varepsilon^{-1} \left\langle \mathbf{k}^\circ, \cdot \right\rangle } \sum_{l=1}^{n} \varsigma_l^\varepsilon \tilde{J}_l e^{-i \tau \mathcal{A}^{\mathrm{eff}}_\varepsilon} J_j \Bigr) (-\Delta + I)^{-3/2} \Bigr\|_{L_2 (\mathbb{R}^d) \to L_2(\mathbb{R}^d)}
			\\ 
			= \Bigl\| \Bigl(e^{-i \tau \varepsilon^{-2} \mathcal{A}} e^{i  \left\langle \mathbf{k}^\circ, \cdot \right\rangle } \varsigma_j  - e^{i \left\langle \mathbf{k}^\circ, \cdot \right\rangle } \sum_{l=1}^{n} \varsigma_l \tilde{J}_l e^{-i \tau \varepsilon^{-2} \mathcal{A}^{\mathrm{eff}}} J_j \Bigr) \varepsilon^{3}  (-\Delta + \varepsilon^{2} I)^{-3/2} \Bigr\|_{L_2 (\mathbb{R}^d) \to L_2(\mathbb{R}^d)},
		\end{multlined}
	\end{equation*}
	where
	\begin{equation*}
		\mathcal{A}^{\mathrm{eff}} = \{\mathcal{A}^{\mathrm{eff},lp}\}_{l,p=1}^n, \qquad \mathcal{A}^{\mathrm{eff}, lp} \coloneqq  \lambda_0 I - i \left\langle \mathfrak{g}^{1,lp}, \nabla \right\rangle - \operatorname{div} \mathfrak{g}^{2,lp} \nabla.
	\end{equation*} 
	
	Next, we need the following operator identities:
	\begin{align}
			\label{Fourier_ident_1}
			\Phi^* \varepsilon^3 (|\delta \mathbf{k}|^2 + \varepsilon^2)^{-3/2} \Phi e^{i\left\langle \mathbf{k}^\circ, \mathbf{x}\right\rangle } &= e^{i\left\langle \mathbf{k}^\circ, \mathbf{x}\right\rangle } \varepsilon^{3} (- \Delta + \varepsilon^{2}I)^{-3/2},
			\\
			\label{Fourier_ident_2}
			\Phi^* \tilde{J}_l e^{-i \tau \varepsilon^{-2} \mathfrak{g} (\delta \mathbf{k})} J_j \varepsilon^3 (|\delta \mathbf{k}|^2 + \varepsilon^2)^{-3/2} \Phi e^{i\left\langle \mathbf{k}^\circ, \mathbf{x}\right\rangle } &= e^{i\left\langle \mathbf{k}^\circ, \mathbf{x}\right\rangle } \tilde{J}_l e^{-i \varepsilon^{-2} \tau \mathcal{A}^{\mathrm{eff}}} J_j \varepsilon^{3} (- \Delta + \varepsilon^{2}I)^{-3/2}.
	\end{align}
	Introduce the projection $F_\varkappa \coloneqq \Phi^* \chi_{\mathrm{B}_\varkappa(\mathbf{0})}(\mathbf{k}) \Phi$. Here $\chi_{\mathrm{B}_\varkappa(\mathbf{0})}(\mathbf{k})$ is the characteristic
	function of the ball $\mathrm{B}_\varkappa(\mathbf{0})$. Obviously, $\varepsilon^3 (|\delta \mathbf{k}|^2 + \varepsilon^2)^{-3/2} (1 - \chi_{\mathrm{B}_\varkappa(\mathbf{0})}(\delta \mathbf{k})) \le \varkappa^{-1} \varepsilon$. Applying~\eqref{Fourier_ident_1} and taking into account Remark~\ref{phi_l_bound_remark}, we have
	\begin{equation*}
		\begin{multlined}[c][0.9\textwidth]
			\Bigl\| \Bigl(e^{-i \tau \varepsilon^{-2} \mathcal{A}} e^{i  \left\langle \mathbf{k}^\circ, \cdot \right\rangle } \varsigma_j  - e^{i \left\langle \mathbf{k}^\circ, \cdot \right\rangle } \sum_{l=1}^{n} \varsigma_l  \tilde{J}_l e^{-i \tau \varepsilon^{-2} \mathcal{A}^{\mathrm{eff}}} J_j  \Bigr) \varepsilon^{3} (- \Delta + \varepsilon^{2}I)^{-3/2} (I - F_\varkappa) \Bigr\|_{L_2 (\mathbb{R}^d) \to L_2(\mathbb{R}^d)} \\ \le \biggl( \|\varsigma_j\|_{L_\infty} + \sum_{l=1}^n \|\varsigma_l\|_{L_\infty} \biggr)  \varkappa^{-1} \varepsilon.
		\end{multlined}
	\end{equation*}
	
	Consider the operator 
	$$
	\Bigl(e^{-i \tau \varepsilon^{-2} \mathcal{A}} e^{i  \left\langle \mathbf{k}^\circ, \cdot \right\rangle } \varsigma_j  - e^{i \left\langle \mathbf{k}^\circ, \cdot \right\rangle } \sum_{l=1}^{n} \varsigma_l \tilde{J}_l e^{-i \tau \varepsilon^{-2} \mathcal{A}^{\mathrm{eff}}} J_j \Bigr) \varepsilon^{3} (- \Delta + \varepsilon^{2}I)^{-3/2}  F_\varkappa,
	$$
	which, by virtue of~\eqref{Fourier_ident_1}, \eqref{Fourier_ident_2}, can be written as
	\begin{equation*}
		\Bigl( e^{-i \tau \varepsilon^{-2} \mathcal{A}}  \varsigma_j  - \sum_{l=1}^{n} \varsigma_l \Phi^* \tilde{J}_l e^{-i \tau \varepsilon^{-2} \mathfrak{g} (\delta \mathbf{k})} J_j \Phi \Bigr) \Phi^* \varepsilon^3 (|\delta \mathbf{k}|^2 + \varepsilon^2)^{-3/2} \chi_{\mathrm{B}_\varkappa(\mathbf{0})}(\delta \mathbf{k})\Phi e^{i\left\langle \mathbf{k}^\circ, \mathbf{x}\right\rangle }.
	\end{equation*}
	Recall that the operator $\mathcal{A}$ is decomposed into direct integral~\eqref{Gelfand_A_decompose}. Using~\eqref{Gelfand_Fourier} and~\eqref{Gelfand_Fourier_inv} (with $\mathbf{k}' = \mathbf{k}^\circ$), one obtains
	\begin{multline*}
		\Bigl\| \Bigl( e^{-i \tau \varepsilon^{-2} \mathcal{A}}  \varsigma_j  - \sum_{l=1}^{n} \varsigma_l \Phi^* \tilde{J}_l e^{-i \tau \varepsilon^{-2} \mathfrak{g} (\delta \mathbf{k})} J_j  \Phi \Bigr) \Phi^* \varepsilon^3 (|\delta \mathbf{k}|^2 + \varepsilon^2)^{-3/2} \chi_{\mathrm{B}_\varkappa(\mathbf{0})}(\delta \mathbf{k})\Phi e^{i\left\langle \mathbf{k}^\circ, \mathbf{x}\right\rangle } \Bigr\|_{L_2(\mathbb{R}^d) \to L_2(\mathbb{R}^d)}
		\\
		= \underset{\mathbf{k} \in \mathbb{T}^d}{\operatorname{ess-sup}\,}
		\Bigl\|	\Bigl( e^{-i \tau \varepsilon^{-2} \mathcal{A}(\mathbf{k})} \varsigma_j  - \sum_{l=1}^{n} \varsigma_l \tilde{J}_l e^{-i \tau \varepsilon^{-2} \mathfrak{g} (\delta \mathbf{k})} J_j \Bigr)  \varepsilon^3 (|\delta \mathbf{k}|^2 + \varepsilon^2)^{-3/2} \chi_{\mathrm{B}_\varkappa(\mathbf{0})}(\delta \mathbf{k}) P_0 \Bigr\|_{L_2(\Omega) \to L_2(\Omega)}.
	\end{multline*}
		
	From the inclusion $\Ran \varsigma_j P_0 \subset \mathfrak{N}$ and~\eqref{exp_basis} it is seen that
	\begin{multline*}
		\Bigl( e^{-i \tau \varepsilon^{-2} \mathcal{A}(\mathbf{k})} \varsigma_j  - \sum_{l=1}^{n} \varsigma_l \tilde{J}_l e^{-i \tau \varepsilon^{-2} \mathfrak{g} (\delta \mathbf{k})} J_j \Bigr)  \varepsilon^3 (|\delta \mathbf{k}|^2 + \varepsilon^2)^{-3/2} \chi_{\mathrm{B}_\varkappa(\mathbf{0})}(\delta \mathbf{k}) P_0
		\\
		= \Bigl(e^{-i \tau \varepsilon^{-2} \mathcal{A}(\mathbf{k})} - e^{-i \tau \varepsilon^{-2} \mathfrak{G}^\circ(\delta \mathbf{k}) P}\Bigr) P \varsigma_j \varepsilon^3 (|\delta \mathbf{k}|^2 + \varepsilon^2)^{-3/2} \chi_{\mathrm{B}_\varkappa(\mathbf{0})}(\delta \mathbf{k}) P_0.
	\end{multline*}
	Application of Theorem~\ref{exp_main_thrm} (with $\tau$ replaced by $\tau \varepsilon^{-2}$) together with the equality $\| \varsigma_j \|_{L_2(\Omega)}=1$ gives the estimate with the constants that do not depend on $\mathbf{k}$:
 	\begin{multline*}
		\bigl\| \bigl(e^{-i \tau \varepsilon^{-2} \mathcal{A}(\mathbf{k})} - e^{-i \tau \varepsilon^{-2} \mathfrak{G}^\circ(\delta \mathbf{k}) P}\bigr) P \varsigma_j \varepsilon^3 (|\delta \mathbf{k}|^2 + \varepsilon^2)^{-3/2} \chi_{\mathrm{B}_\varkappa(\mathbf{0})}(\delta \mathbf{k}) P_0 \bigr\|_{L_2(\Omega) \to L_2(\Omega)} \\
		\le \bigl(3C_7 |\delta \mathbf{k}| + C_{11} \varepsilon^{-2} |\tau| |\delta \mathbf{k}|^3\bigr) \varepsilon^3 (|\delta \mathbf{k}|^2 + \varepsilon^2)^{-3/2} \le (3C_7 + C_{11} |\tau|) \varepsilon.
	\end{multline*} 
	This completes the proof.
\end{proof}

\begin{remark}
	Interpolating between~\eqref{thrm_est_operators} and the estimate
	\begin{equation*}
		\Bigl\| e^{-i \tau \mathcal{A}_\varepsilon} e^{i \varepsilon^{-1} \left\langle \mathbf{k}^\circ, \cdot \right\rangle } \varsigma_j^\varepsilon  - e^{i \varepsilon^{-1} \left\langle \mathbf{k}^\circ, \cdot \right\rangle } \sum_{l=1}^{n} \varsigma_l^\varepsilon \tilde{J}_l e^{-i \tau \mathcal{A}^{\mathrm{eff}}_\varepsilon} J_j  \Bigr\|_{L_2(\mathbb{R}^d) \to L_2(\mathbb{R}^d)} \le \|\varsigma_j\|_{L_\infty} + \sum_{l=1}^n \|\varsigma_l\|_{L_\infty},
	\end{equation*}
	we obtain
	\begin{multline*}
		\Bigl\| e^{-i \tau \mathcal{A}_\varepsilon} e^{i \varepsilon^{-1} \left\langle \mathbf{k}^\circ, \cdot \right\rangle } \varsigma_j^\varepsilon  - e^{i \varepsilon^{-1} \left\langle \mathbf{k}^\circ, \cdot \right\rangle } \sum_{l=1}^{n} \varsigma_l^\varepsilon \tilde{J}_l e^{-i \tau \mathcal{A}^{\mathrm{eff}}_\varepsilon} J_j  \Bigr\|_{H^q(\mathbb{R}^d) \to L_2(\mathbb{R}^d)} \\ \le  \biggl( \|\varsigma_j\|_{L_\infty} + \sum_{l=1}^n \|\varsigma_l\|_{L_\infty} \biggr)^{1-q/3} \mathcal{C}^{q/3} (1+|\tau|)^{q/3} \varepsilon^{q/3}, \qquad 0 \le q \le 3, \quad j = 1,\ldots,n.
	\end{multline*}
	When applied to Cauchy problem~\eqref{Cauchy_problem}, this leads to the estimate
	\begin{multline*}
		\| u_{j,\varepsilon} (\cdot, \tau) - u^{\mathrm{eff}}_{j,\varepsilon} (\cdot, \tau) \|_{L_2(\mathbb{R}^d)} \le \biggl( \|\varsigma_j\|_{L_\infty} + \sum_{l=1}^n \|\varsigma_l\|_{L_\infty} \biggr)^{1-q/3} \mathcal{C}^{q/3} (1+|\tau|)^{q/3} \varepsilon^{q/3} \|f_j\|_{H^q(\mathbb{R}^d)}, \\ j = 1, \ldots,n,
	\end{multline*}
	for $\varepsilon > 0$, $\tau \in \mathbb{R}$, $f_j \in H^q(\mathbb{R}^d)$, and $0 \le q \le 3$.
\end{remark}

\begin{remark}
	Suppose that $\lambda_0$ is a spectral edge of the operator $\mathcal{A}$ and the corresponding extremal value is attained by one band function\textup{:} $\lambda_0 = E_s(\mathbf{k}^\circ)$, $\lambda_0 \ne E_l(\mathbf{k}^\circ)$, $l \ne s$. In this case, $n = 1$, and, by~\eqref{frakG1=0_cond}, \eqref{frak_g_1}, $\mathfrak{g}^{1,11} = 0$. Let $u_\varepsilon (\mathbf{x},\tau) \coloneqq u_{1,\varepsilon} (\mathbf{x},\tau)$ be the solution of problem~\eqref{Cauchy_problem}, and let $\tilde{v}^{\mathrm{eff}} (\mathbf{x},\tau)$ be the solution of the problem
	\begin{equation*}
		\left\{
		\begin{aligned}
			&i \frac{\partial}{\partial \tau} \tilde{v}^{\mathrm{eff}} (\mathbf{x},\tau) = \widetilde{\mathcal{A}}^{\mathrm{eff}} \tilde{v}^{\mathrm{eff}} (\mathbf{x},\tau) ,\\
			&\tilde{v}^{\mathrm{eff}} (\mathbf{x},0) = f_1 (\mathbf{x}),
		\end{aligned}
		\right.
	\end{equation*}
	where $\widetilde{\mathcal{A}}^{\mathrm{eff}} = - \operatorname{div} \mathfrak{g}^{2,11} \nabla$. Let $\varepsilon > 0$, $\tau \in \mathbb{R}$, $f_1 \in H^3(\mathbb{R}^d)$. We have
	\begin{equation*}
		\| u_\varepsilon (\cdot, \tau) - e^{- i \tau \varepsilon^{-2} \lambda_0} e^{i \varepsilon^{-1} \left\langle \mathbf{k}^\circ, \mathbf{x} \right\rangle} \varsigma_1^\varepsilon (\mathbf{x})  \tilde{v}^{\mathrm{eff}} (\cdot, \tau) \|_{L_2(\mathbb{R}^d)} \le \mathcal{C} (1+|\tau|) \varepsilon \|f_1\|_{H^3(\mathbb{R}^d)}.
	\end{equation*}
\end{remark}

\end{document}